\documentclass[12pt,oneside]{amsart}
\usepackage{amssymb, amsmath, amsthm}

\usepackage{epsfig}
\usepackage{graphicx}
\usepackage{color}
\usepackage{enumerate}
\usepackage{amsrefs}
\usepackage[pagewise]{lineno}

\usepackage{pinlabel}

\theoremstyle{plain}
\newtheorem{theorem}{Theorem}[section]
\newtheorem{question}[theorem]{Question}

\newtheorem*{theorem*}{Theorem}
\newtheorem*{theorem-DisjtSS}{Theorem \ref{Thm: Disjt SS}}
\newtheorem*{theorem-EssentialTorus}{Theorem \ref{Thm: Essential Torus}}
\newtheorem*{cor-ScharlWu}{Corollary \ref{Cor-ScharlWu}}
\newtheorem*{corollary-MSC 2}{Corollary \ref{Cor: MSC 2}}
\newtheorem*{theorem-Main A}{Theorem \ref{Thm: Main A}}
\newtheorem*{theorem-Main B}{Theorem \ref{Thm: Main Thm B}}
\newtheorem*{Cor-Unknotting}{Theorem \ref{Cor: Prime Unknotting 1}}
\newtheorem*{Cor-genusbandsum}{Theorem \ref{Thm: Genus superadd}}
\newtheorem*{Cor-bandsumscc}{Corollary \ref{Cor: Band Sums CC}}
\newtheorem*{maintheorem-intro}{Theorem \ref{Main Theorem}}

\newtheorem{corollary}[theorem]{Corollary}
\newtheorem{lemma}[theorem]{Lemma}

\theoremstyle{definition}
\newtheorem{remark}[theorem]{Remark}

\theoremstyle{definition}



\newcommand{\up}{\uparrow}
\newcommand{\dn}{\downarrow}

\newcommand{\defn}[1]{\emph{#1}}
\newcommand{\bdd}{\partial}
\newcommand{\boundary}{\partial}

\newcommand{\mc}[1]{\mathcal{#1}}




\begin{document}

\title{Distance two links}
\author{Ryan Blair, Marion Campisi, Jesse Johnson, Scott A. Taylor, Maggy Tomova}

\begin{abstract} In this paper, we characterize all links in $S^3$ with bridge number at least three that have a bridge sphere of distance two. We show that a link $L$ has a bridge sphere of distance at most two then it falls into at least one of three categories:
\begin{itemize}
\item The exterior of $L$ contains an essential meridional sphere.

\item $L$ can be decomposed as a tangle product of a Montesinos tangle with an essential tangle in a way that respects the bridge surface and either the Montesinos tangle is rational or the essential tangle contains an incompressible, boundary-incompressible annulus.

\item $L$ is obtained by banding from another link $L'$ that has a bridge sphere of the same Euler characteristic as the bridge sphere for $L$ but of distance 0 or 1.
\end{itemize}
\end{abstract}

\thanks{This paper was written as part of an AIM SQuaREs project. The third author was also supported by NSF grant DMS-1006369 and the fifth author was supported by NSF grant DMS-1054450. The fourth author was partially supported by a grant from the Natural Sciences Division of Colby College}

\maketitle

\section{Introduction}

Following Hempel's definition of distance for Heegaard splittings~\cite{He}, there have been a number of results showing that knots with high distance bridge surfaces are well behaved. For example, high distance knots are hyperbolic and do not contain essential surfaces of small euler characteristic~\cite{BS} and they have unique minimal bridge surfaces~\cite{To07}. Additionally, connect sums of knots with high distance bridge surfaces have multiple distinct bridge surfaces~\cite{JT}. Most recently, the Cabling Conjecture was established for all knots with a bridge surface of distance at least three~\cite{BCJTT, BCJTT2}.

These results motivate us to study low distance knots and links. (We will use the term ``link'' below to mean both one-component and multiple-component links.) Links that have a bridge surface of distance two are of particular interest because a link with a minimal bridge sphere of distance one always contains an essential meridional surface~\cite{Th} and links with bridge surfaces of distance greater than two share many nice properties that often allow them to be addressed collectively as ``high distance" links.

In this paper, we prove the following theorem. Precise definitions will be given later.

\begin{theorem}\label{thm:main}
Let $L$ be a link in $S^3$ and let $\Sigma$ be a bridge sphere for $L$ of distance 2. Then at least one of the following holds:
\begin{enumerate}
\item There is an essential meridional sphere in the exterior of $L$,

\item $L$ can be decomposed as a tangle product of a bridge-split Montesinos tangle $(B_1,T_1)$ with an essential tangle $(B_2,T_2)$ such that $\Sigma \cap B_1$ is a disk bridge surface for $(B_1,T_1)$ and either $(B_1,T_1)$ is rational or the exterior of $T_2$ in $B_2$ contains an incompressible, boundary incompressible annulus, or

\item $L$ can be obtained via banding from a link $L'$ that has a bridge sphere with the same Euler characteristic as $\Sigma \setminus \eta(L)$ but has distance 0 or 1.
\end{enumerate}
\end{theorem}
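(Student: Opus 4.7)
The plan is to use the distance-two hypothesis to extract two disjoint disks whose combined surgery on $\Sigma$ produces either an essential sphere in the exterior of $L$, a tangle decomposition of $(S^3,L)$, or a new link obtained from $L$ by a band surgery. By the definition of distance, there exist disks $D_+ \subset B_+$ and $D_- \subset B_-$ on the two sides of $\Sigma$, each either a compressing disk or a cut disk for $\Sigma$ in $(B_\pm, T_\pm)$, together with an essential vertex $\gamma$ in the curve-and-arc complex of $\Sigma \setminus \eta(L)$ realizing a length-two path from $\partial D_+$ to $\partial D_-$; in particular $\gamma$ is disjoint from both $\partial D_\pm$ but bounds no disk of either type on either side. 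The crucial feature is that $\partial D_+$ and $\partial D_-$ are disjoint on $\Sigma$, allowing simultaneous surgery.

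I would then split into cases according to the types of $D_\pm$. If both are compressing disks, the simultaneous compression of $\Sigma$ along $D_+$ and $D_-$ yields a sphere $S$ transverse to $L$, producing a tangle decomposition $(S^3,L) = (B_1,T_1) \cup_S (B_2,T_2)$ with each $\Sigma \cap B_i$ a disk bridge surface for $(B_i,T_i)$. The intermediate vertex $\gamma$ persists as essential data in $\Sigma \setminus \eta(L)$ and constrains both tangles. If neither tangle admits an essential meridional sphere (otherwise conclusion~(1) holds), then the combinatorics of $\gamma$ together with the bridge-split structure forces one tangle to be Montesinos and, when that Montesinos tangle is not rational, forces the other to contain an incompressible, boundary-incompressible annulus, yielding conclusion~(2). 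If at least one of $D_\pm$ is a cut disk, that disk determines a band $\beta$ attached to $L$; performing the corresponding band surgery, simultaneously on both sides when both are cut disks, produces a link $L'$, and $\Sigma$ induces a bridge sphere $\Sigma'$ for $L'$ with $\chi(\Sigma' \setminus \eta(L')) = \chi(\Sigma \setminus \eta(L))$. After the surgery, the cut disk(s) become compressing disk(s), and combined with the opposite-side disk data the new bridge sphere $\Sigma'$ has distance at most $1$, giving conclusion~(3).

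The main obstacle will be the analysis in the two-compressing-disks case, specifically the identification of the Montesinos structure and the essential annulus. One must classify low-complexity tangle products whose bridge sphere supports the arc-and-curve data provided by $\gamma$, and then argue that any configuration which is neither Montesinos nor furnishes an essential meridional sphere would allow further distance reduction below $2$, contradicting the hypothesis. Tracking $\gamma$ through both compressions and handling non-separating configurations, where $\partial D_\pm$ or $\gamma$ fails to separate $\Sigma \setminus \eta(L)$, will require careful ad hoc arguments. A secondary difficulty lies in the banding cases: one must verify that $\Sigma'$ genuinely remains a bridge sphere for $L'$ of the claimed Euler characteristic, rather than becoming destabilized or acquiring spurious topology from the band surgery, and that the cut-disk-to-compressing-disk conversion really reduces distance as claimed.
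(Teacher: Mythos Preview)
Your proposal contains a fundamental error that invalidates the entire strategy. You assert that ``the crucial feature is that $\partial D_+$ and $\partial D_-$ are disjoint on $\Sigma$, allowing simultaneous surgery.'' This is false: if two compressing disk boundaries on opposite sides were disjoint, the distance of $\Sigma$ would be at most $1$, not $2$. A length-two path $\partial D_+,\, \gamma,\, \partial D_-$ in the curve complex only guarantees that $\gamma$ is disjoint from each of $\partial D_\pm$; it says nothing about $\partial D_+ \cap \partial D_-$, and in fact the distance-two hypothesis forces every such pair to intersect. Consequently there is no ``simultaneous compression of $\Sigma$ along $D_+$ and $D_-$,'' and the tangle-decomposition sphere $S$ you build in the first case does not exist.

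A secondary issue is that the paper's distance is measured in the curve complex, not the arc-and-curve complex, so there is no a priori reason for cut disks to appear in the realizing path; your entire ``at least one cut disk'' case is unmotivated under the stated hypotheses.

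The paper's actual approach works around the non-disjointness of $D_+$ and $D_-$ by focusing on the middle curve $\gamma_2$. Since $\gamma_2$ separates $\Sigma$ and both $\partial D_\pm$ must lie on the same side (else they would be disjoint), one side $F$ of $\gamma_2$ is a bicompressible disk. The sphere $C = \partial(\eta(F))$ is the real candidate decomposing sphere, and the proof proceeds by a trichotomy on whether $C$ is incompressible (giving conclusion~(1)), compressible to the outside (a sweepout and pop-over argument produces the band for conclusion~(3)), or compressible to the inside (a delicate analysis of how a compressing disk meets the level surfaces of $H^{in} \cong D^2 \times I$ yields the Montesinos structure and, via an innermost-bigon/square argument on the intersection graph with $C \cup F^c$, either rationality or the essential annulus). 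None of this machinery is present in your outline, and it cannot be recovered from a simultaneous-compression picture that does not exist.
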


Conclusion (2) of Theorem \ref{thm:main} is illustrated in Figure \ref{fig:ThmExMont}. Conclusion (3) is illustrated in Figure \ref{fig:ThmExBand}. All of the conclusions are discussed in detail in Section~\ref{sect:constr}.

\begin{figure}[htb]
  \begin{center}
  \includegraphics[width=2in]{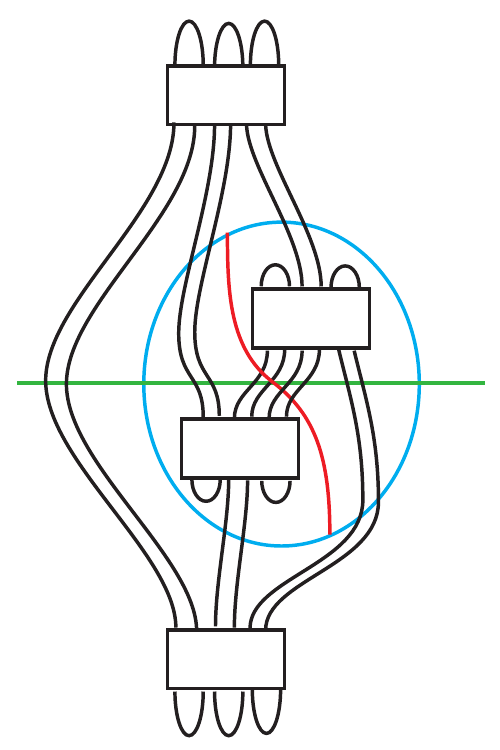}
  \caption{An example of a link decomposed as the tangle product of a bridge-split Montesinos tangle $(B_1,T_1)$ with a tangle $(B_2,T_2)$ such that the bridge surface for $L$ is a disk bridge surface for $(B_1,T_1)$. (Each group of strands may, in general, consist of arbitrarily many strands.)}
  \label{fig:ThmExMont}
  \end{center}
\end{figure}

\begin{figure}[htb]
  \begin{center}
  \includegraphics[width=2in]{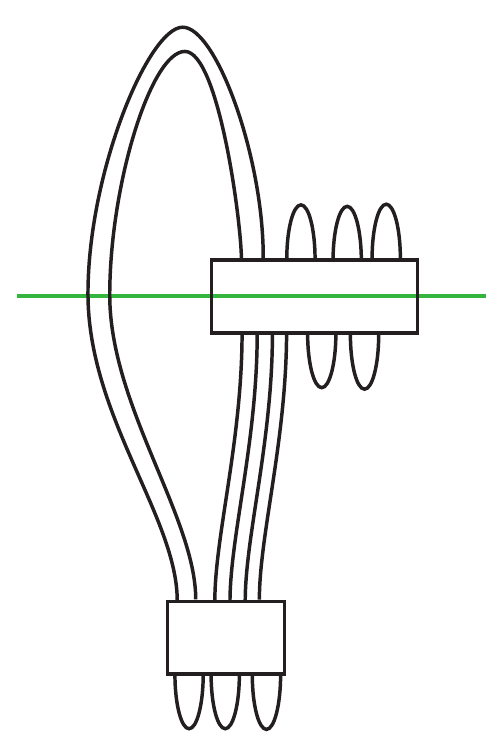}
  \caption{An example of a link obtained by banding.}
  \label{fig:ThmExBand}
  \end{center}
\end{figure}

We note that this paper is the analogue to recent results about 3-manifolds with distance-two Heegaard splittings. Hempel~\cite{He} and Thompson~\cite{Th3} (independently) classified all 3-manifolds that have genus two, distance-two Heegaard splittings. In work in preparation, Rubinstein and Thompson~\cite{RT} have proved a similar classification theorem for all manifolds that have a distance-two Heegaard splitting of any genus. Portions of the proof of Theorem \ref{thm:main} are inspired by the work of Rubinstein and Thompson on distance-two Heegaard splittings.

The analogy between the present work and these results about Heegaard splittings is most easily seen in the three-bridge case. Theorem~\ref{thm:main} implies the following:

\begin{corollary}
\label{coro:main}
If $L$ is a 3-bridge knot with a minimal bridge sphere of distance two, then one of the following holds:

\begin{enumerate}
\item The exterior of $L$ contains an incompressible meridional 4-punctured sphere.
\item $L$ is obtained by banding a 3-bridge presentation of either the unknot, a 2-bridge knot, or the connected sum of two 2-bridge knots.
\item $L$ is a small Montesinos knot.
\end{enumerate}
\end{corollary}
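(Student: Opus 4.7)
The plan is to apply Theorem~\ref{thm:main} directly to $L$ and specialize each of its three conclusions to the 3-bridge case. Since $\Sigma\setminus\eta(L)$ is a 6-punctured sphere, $\chi(\Sigma\setminus\eta(L))=-4$, so any link $L'$ produced by banding in conclusion~(3) of Theorem~\ref{thm:main} will itself carry a 6-punctured bridge sphere.

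If conclusion~(1) of Theorem~\ref{thm:main} holds, pick an essential meridional sphere $S$ in the exterior of $L$ meeting $L$ in the fewest points; because $L$ is a knot, $|S\cap L|$ is even and positive. If $|S\cap L|=2$ then $L=K_1\#K_2$ is composite, and additivity of bridge number ($b(K_1)+b(K_2)=b(L)+1=4$, each summand at least $2$) forces both $K_i$ to be 2-bridge. The standard 3-bridge presentation of $K_1\#K_2$ is obtained by banding the split union $K_1\sqcup K_2$ along the connect-sum arc, so $L$ lands in conclusion~(2) of the corollary. If $|S\cap L|=4$ we are in conclusion~(1) directly. The case $|S\cap L|\geq 6$ is ruled out by a standard innermost-disk/outermost-arc reduction using $\Sigma$ (which is compressible via its bridge disks), reducing the puncture count back to $4$ or $2$.

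If conclusion~(3) of Theorem~\ref{thm:main} holds, then $L$ is obtained by banding from a link $L'$ whose bridge sphere is a 6-punctured sphere of distance $\leq 1$. A classification of such low-distance 3-bridge spheres, in the spirit of Hempel and Thompson, shows that $L'$ must be either the unknot, a 2-bridge knot, or the connected sum of two 2-bridge knots, in each case equipped with its natural 3-bridge presentation; this is exactly conclusion~(2) of the corollary. If conclusion~(2) of Theorem~\ref{thm:main} holds, write $L=(B_1,T_1)\cup(B_2,T_2)$ with $(B_1,T_1)$ a bridge-split Montesinos tangle. If $T_1$ is rational, every rational tangle can be untwisted by a sequence of bandings supported inside $B_1$, replacing $T_1$ by the trivial tangle; this produces a link $L'$ with a 6-punctured bridge sphere of distance $\leq 1$, and the previous case places $L$ in conclusion~(2). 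If instead the exterior of $T_2$ contains an incompressible, boundary-incompressible annulus, then together with the Montesinos structure of $(B_1,T_1)$ this annulus promotes $L$ to a genuine Montesinos knot, and the 3-bridge hypothesis caps the number of rational summands tightly enough to preclude any closed essential surface in the exterior of $L$, so $L$ is a small Montesinos knot, giving conclusion~(3).

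The main obstacle I expect is the low-distance classification step used twice above: identifying precisely which links admit a 6-punctured bridge sphere of distance $0$ or $1$, and then matching the output of a banding move to one of the three listed models (unknot, 2-bridge, or connected sum of two 2-bridge). A secondary technical point is bookkeeping how the Montesinos decomposition of $T_1$ together with the annulus in $T_2$ assembles globally into a Montesinos presentation of $L$, and verifying that the 3-bridge constraint restricts the number of rational tangles sufficiently to force \emph{small}ness.
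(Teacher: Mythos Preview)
Your approach of applying Theorem~\ref{thm:main} as a black box diverges from the paper's, which instead returns to the internal objects from the proof of Theorem~\ref{thm:main}: the disk $F$, the sphere $C = \partial H^{in}$, and the trichotomy on where $C$ compresses. A short puncture count (using $|F\cap L|\le 4$ and $|C\cap L|\le 2|F\cap L|-4$ together with Claim~0) forces $|C\cap L|=4$ in the 3-bridge case. Then: if $C$ is incompressible you get the 4-punctured sphere directly; if $C$ compresses into $H^{out}$ you invoke Claim~3 to get banding; if $C$ compresses into $H^{in}$ you argue the 2-strand tangle $L\cap H^{in}$ is rational (otherwise a strand is knotted, making $L$ composite or 2-bridge), and then $L$ is visibly a union of three rational tangles, hence a small Montesinos knot.

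Your handling of conclusion~(2) of Theorem~\ref{thm:main} contains a genuine gap in both sub-cases. In the rational sub-case, the assertion that a rational tangle ``can be untwisted by a sequence of bandings supported inside $B_1$'' to produce a distance~$\le 1$ presentation is neither standard nor correct as stated; a single banding in the sense of Section~\ref{sec:banding} is a very specific move on one bridge arc and does not in general untwist an arbitrary rational tangle while preserving the bridge sphere. More to the point, this sub-case should land in conclusion~(3) of the corollary, not~(2): once $\partial B_1$ is a 4-punctured sphere and $T_1$ is rational, the tangle $T_2=L\cap H^{out}$ is already split by $F^c$ into two trivial 2-strand tangles, so $L$ is a sum of three rational tangles. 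In the annulus sub-case, the claim that an incompressible, boundary-incompressible annulus in the exterior of $T_2$ ``promotes $L$ to a genuine Montesinos knot'' is unsupported; such an annulus carries no Montesinos information in general. The paper avoids this entirely by observing that a 2-strand tangle in a ball is either rational or has a knotted strand, and the latter contradicts the hypotheses.

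A secondary gap: your reduction of the $|S\cap L|\ge 6$ case by ``standard innermost-disk/outermost-arc'' against $\Sigma$ is not routine; essential meridional spheres with more than four punctures can and do occur in 3-bridge knot exteriors, and intersecting with a compressible bridge sphere does not automatically lower the puncture count. The paper never faces this issue because it produces the specific 4-punctured sphere $C$ rather than starting from an arbitrary essential sphere.
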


Recall that every genus-two 3-manifold is the double-branched cover of $S^3$ over a three-bridge knot and that every genus-two Heegaard surface is the lift of a 6-punctured bridge sphere to the double-branched cover. Additionally, every essential loop in a genus-two Heegaard surface is the lift of an essential loop in the corresponding bridge sphere. This fact implies that the distance of a 6-punctured bridge sphere is equal to the distance of the corresponding genus-two Heegaard surface in the double-branched cover. Thus, as expected, the Heegaard splittings in the Hempel/Thompson classification are precisely the double branched covers of the links in Corollary~\ref{coro:main}.

 We describe the types of distance-two bridge spheres in Section~\ref{sect:constr}. A number of technical lemmas are proved in Section~\ref{sect:simplifying}, followed by the proof of Theorem~\ref{thm:main} and Corollary~\ref{coro:main}  in Section~\ref{sect:proof}.

\section{Definitions and Constructions}
\label{sect:constr}

\subsection{Bridge surfaces} Let $L$ be a link in the 3-sphere and $\eta(L)$ a regular neighborhood of $L$. Let $h$ be the standard height function from $S^3$ to $[0,1]$ whose level surfaces are concentric 2-spheres. A level sphere $\Sigma = h^{-1}(t)$ is called a \emph{bridge sphere} for $L$ if all maxima of $h|_L$ are above $\Sigma$ and all minima of $h|_L$ are below $\Sigma$. In particular, if $\Sigma$ is a bridge sphere for $L$, then $L$ intersects each of the 3-balls bounded by $\Sigma$ in boundary parallel arcs called \emph{bridges}. The disks of parallelism are called \emph{bridge disks}. More precisely, a bridge disk is a disk with interior disjoint from $L$ and the bridge surface, and whose boundary is the union of an arc in $L$ and an arc in the bridge surface. For technical reasons, we will only discuss links whose bridge spheres have bridge number at least three (so at least 6 punctures). However, two-bridge knots and links are well understood~\cite{HT, STo} and every one-bridge knot is an unknot.

Given a Morse function $h: M \rightarrow [0,1]$ where $M$ is any 3-manifold, an arc or a surface $F$ properly embedded in $M$ is \emph{vertical} if $h|_F$ does not have any critical points in the interior of $F$.

\subsection{Distance}

The \emph{curve complex} of a compact surface $F$ is the simplicial complex with vertices that correspond to isotopy classes of essential (including not boundary parallel) simple closed curves in $F$. An edge connects two vertices if the corresponding isotopy classes of curves have disjoint representatives. The \emph{distance} of a bridge sphere $\Sigma$ for a link $L$ in $S^3$ is the length of the shortest path in the curve complex for $\Sigma \setminus \eta(L)$ from the boundary of a compressing disk for $\Sigma \setminus \eta(L)$ in $S^3 \setminus \eta(L)$ above $\Sigma$ to the boundary of a compressing disk for  $\Sigma \setminus \eta(L)$ in $S^3 \setminus \eta(L)$ below $\Sigma$. The notion of distance measured in the curve complex was first introduced by Hempel in \cite{He}. There is also a similar notion of distance measured in the arc and curve complex introduced by Bachman and Schleimer in \cite{BS}.

For a bridge sphere of distance two, every compressing disk on one side of the bridge sphere intersects every compressing disk on the other side non-trivially, but there is an essential simple closed curve $\alpha$ in $\Sigma \setminus \eta(L)$ that is disjoint from at least one compressing disk on each side of $\Sigma \setminus \eta(L)$. For each of the possible conclusions of our main theorem, we consider the possibility of a converse.

\subsection{Incompressible spheres}

Recall that a \textit{meridional surface} for a link $L \subset S^3$ is a surface properly embedded in $S^3 \setminus \eta(L)$ whose boundary is a set of meridional curves in $\bdd \eta (L)$. A meridional surface $C$ is \textit{compressible} if there is a disk in $S^3 \setminus\eta(L)$ with interior disjoint from $C$ whose boundary is contained in $C$ and is essential in $C \setminus \eta(L)$. A meridional surface $C$ is \textit{incompressible} if it is not compressible.

The first possible conclusion of our Main Theorem is that our link has an incompressible, boundary-incompressible planar meridional surface. In general, it is likely that there are knots of distance at least 3 which have essential meridional planar surfaces in their exterior. However, if there is an essential planar surface C for the link which intersects the bridge surface $\Sigma$ in a single simple closed curve which is essential on both surfaces, such that $C$ is disjoint from compressing discs for $\Sigma$ on both sides of $\Sigma$, then the link will have distance at most 2. The intersection curve $C \cap \Sigma$ is the middle vertex in a length two path in the curve complex of $\Sigma$. In some cases, the planar surface we construct in the proof of the Main Theorem will be of this type.

\subsection{Disk Bridge Surfaces and Montesinos Tangles}

In what follows, it will be natural to study tangles from the perspective of the height function on $D^2\times [-1,1]$. Let $\phi$ be the projection map $D^2\times [-1,1] \rightarrow [-1,1]$. Given a tangle $T$ properly embedded in $D^2 \times [-1,1]$ such that $\partial T \subset D^2\times \{-1,1\}$, we will say that $T$ is in \emph{bridge position} if all of the maxima of $\phi|_T$ project into $(0,1)$ and all of the minima of $\phi|_T$ project into $(-1,0)$. A disk $\phi^{-1}(s)$ is a \emph{disk bridge surface} for $T$ if $s$ is a regular value of the height function $\phi|_T$, and all the minima of $T$ lie below and all the maxima lie above it.

Recall that an $n$-strand tangle $(B,T)$ is \emph{rational} if each of the $n$ strands can be simultaneously isotoped into $\partial B$ while fixing their endpoints. An $n$-strand tangle $(B,T)$ is \emph{essential} if $\partial B$ is incompressible when viewed as a meridional, planar surface. Let $(B_i, T_i)$ for $i = 1,2$ be a rational tangle and let $D_i \subset \boundary B_i$ be an incompressible $k$-punctured disc for some $k \geq 0$. The result $(B,T)$ of gluing $(B_1, T_1)$ and $(B_2, T_2)$ along $D_1$ and $D_2$ is a \defn{Montesinos tangle} if the image of $D_1$ and  $D_2$ is not boundary-parallel in $(B,T)$. The disc $D = D_1 = D_2$ in $(B,T)$ is a \defn{Montesinos disk} for $(B,T)$. Given a Montesinos tangle $T$ properly embedded in $D^2 \times [-1,1]$ such that $\partial T \subset D^2\times \{-1,1\}$, we say that $T$ is \emph{bridge-split} if $\phi|_T$ has both maxima and minima, the foliation induced by $\phi$ on $D$ consists of parallel arcs and $D$ separates the maxima of $T$ from the minima of $T$, as in Figure \ref{fig:BridgeSplit}. A knot $K$ in $S^3$ is a \emph{tangle product} of tangles $(B_1, T_1)$ and $(B_2, T_2)$ if $S^3$ is the result of gluing $B_1$ to $B_2$ along their boundary and $K=T_1\cup T_2$.

Suppose a link $K$ has a tangle decomposition such that one of the tangles $(B,T)$ is a bridge-split Montesinos tangle. If the bridge surface $\Sigma$ meets $B$ in a disk bridge surface for $T$, then $\Sigma$ has distance at most two. A disk bridge surface for a bridge-split Montesinos tangle is depicted in green in Figure \ref{fig:BridgeSplit}.

\begin{figure}[htb]
  \begin{center}
  \includegraphics[width=1.5in]{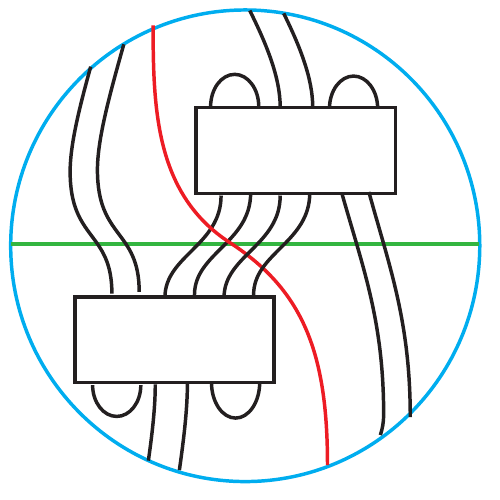}
  \caption{A bridge-split Montesinos tangle}
  \label{fig:BridgeSplit}
  \end{center}
\end{figure}

\subsection{Banding}
\label{sec:banding}
A third way to construct a bridge surface of distance at most two is as follows: Let $L'$ be any link such that some local maximum $m_1 \in L'$ lies below some local minimum $m_2 \in L'$ with respect to the standard height function $h$. Notice that if we monotonically isotope all maxima of $L'$ above all minima of $L'$, there will be a compressing disk for one of the resulting bridge spheres $\Sigma$ that surrounds $m_1$ and a second disjoint compressing disk around $m_2$, on the other side of $\Sigma$. Thus, the level sphere corresponding to a bridge sphere for $L'$ will have distance at most 1. We can also construct a distance-one bridge surface for any link by perturbing a given bridge surface, i.e.\ adding a new pair of canceling bridges. This new bridge surface will no longer be minimal.

Choose any vertical arc $\beta$ with one endpoint at $m_1$ and the other endpoint at a local maximum above $m_2$, as in Figure \ref{fig:banding}. Let $b \subset S_3$ be a band along $\beta$, i.e.\ the image of a square $[0,1] \times [0,1]$ such
\begin{itemize}
\item the interior of $b$ is disjoint from $L'$,
\item $b$ is vertical with respect to $h$,
\item $\beta$ is the image of $\frac{1}{2} \times [0,1]$ and
\item the arcs $[0,1] \times \{0\}$ and $[0,1] \times \{1\}$ are contained in small neighborhoods of $b \cap L'$.
\end{itemize}
Let $L$ be the result of replacing the arcs $[0,1] \times \{0\}$ and $[0,1] \times \{1\}$ of $L'$ with the arcs $\{0\} \times [0,1]$ and $\{1\} \times [0,1]$. Note that even after we choose $\beta$, there are infinitely many different possible bands $b$, related by twisting around $\beta$. In particular, if $L'$ is a knot, but our initial choice of $b$ produces a link, we can add a half twist to ensure that $L$ is a knot. We will say that $L$ is obtained from $L'$ by \emph{banding}.

More generally, the banding operation described here is also known as \emph{integer-sloped rational tangle replacement}. However, for our purposes it suffices to note that the resulting link has a bridge surface of distance at most two. To see this, let $S$ be the level sphere immediately above the highest minimum of $L'$ and let $\ell$ be the boundary of a regular neighborhood in $S$ of $b \cap S$. Any maximum of $L'$ disjoint from $b$ gives rise to a compressing disk for $S$ contained above $S$ and disjoint from $\ell$. Similarly, any minimum of $L'$ that does not cobound (with $m_1$) a monotone subarc of $L'$ will give rise to a compressing disk for $S$ contained below $S$ and disjoint from $\ell$.  Thus, $S$ has distance at most two.
 By choosing a ``sufficiently complicated'' arc $\beta$, we expect that one can always construct a link of distance exactly two, but proving this is beyond the scope of the present paper.
\begin{figure}[htb]
  \begin{center}
  \includegraphics[scale=0.4]{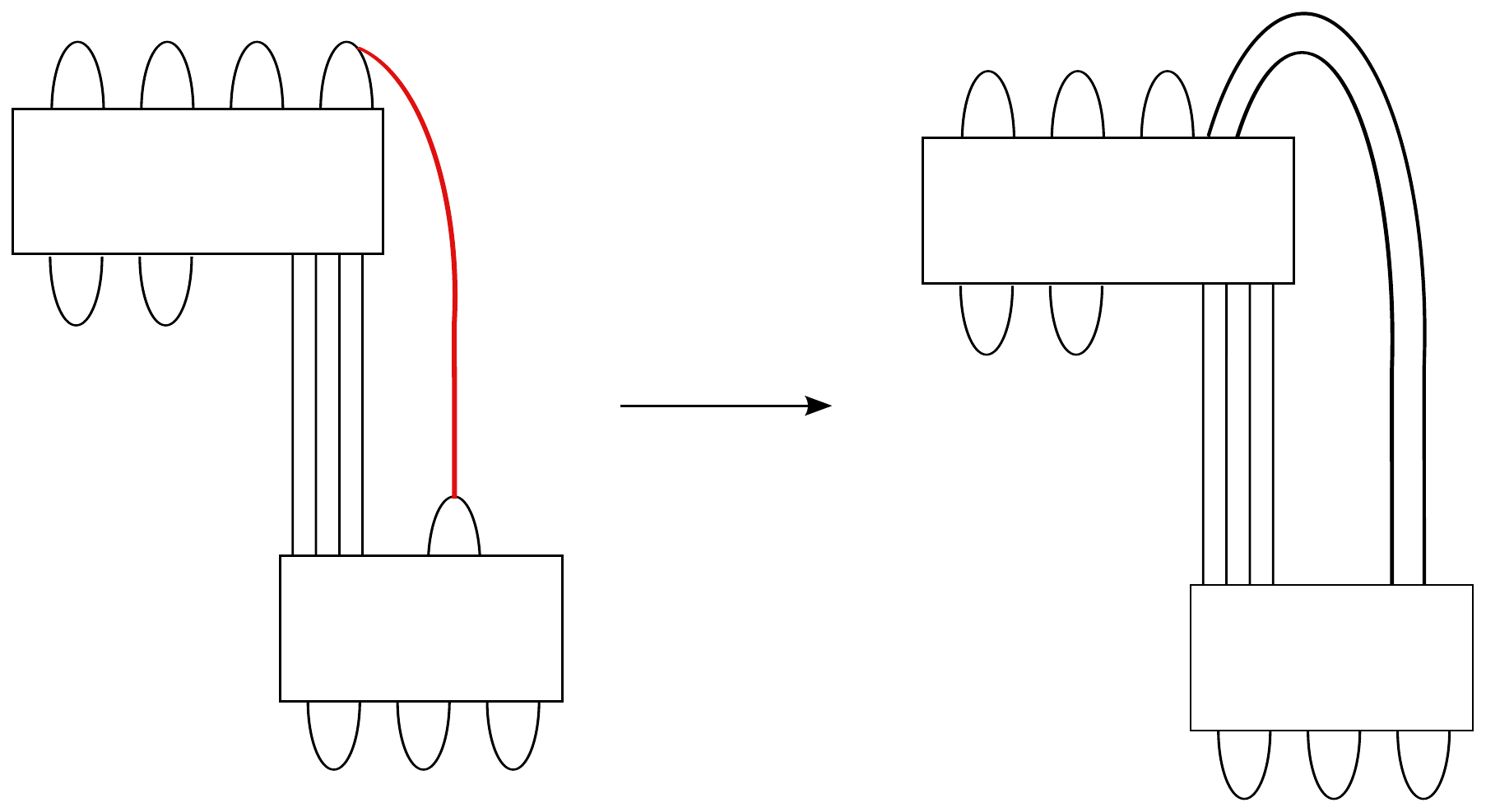}
    \put(-175,90){$\beta$}
  \caption{}
  \label{fig:banding}
  \end{center}
\end{figure}

\section{Simplifying bridge disks}
\label{sect:simplifying}

In this section, we introduce several results that will be needed in the proof of the main theorem. We will have several occasions to consider a tangle embedded in a cylinder and a disjoint collection of boundary compressing disks for its disk bridge surface. In the next lemma, we show that we can always assume that such boundary compressing disks for the disk bridge surface are vertical with respect to the natural height function on $D^2\times [-1,1]$ given by projection onto the second component. We can in fact assume so while also assuming that the tangle is polygonal and composed only of vertical and horizontal components.

\begin{lemma}
\label{lem:vertical boundary compressing disks}
Let $T$ be a tangle properly embedded in a cylinder $V=D^2 \times [-1,1]$ such that all endpoints of $T$ are in $D^2 \times \{-1,1\}$. Suppose that $D^2 \times \{0\}$ is a disk bridge surface for $T$. Let $\mc{D}^{\up}$ and $\mc{D}^{\dn}$ be collections of pairwise disjoint boundary compressing disks for $D^2 \times \{0\}$ above and below $D^2 \times \{0\}$ respectively such that $\mc{D}^{\up} \cap \mc{D}^{\dn}=\emptyset$. Then there is an isotopy during which the tangle remains transverse to $D^2 \times \{0\}$ and after which all disks in $\mc{D}^{\up}$ and $\mc{D}^{\dn}$ are disjoint and vertical. Furthermore, we may assume that after the isotopy, the tangle $T$ is a set of polygonal curves composed only of horizontal and vertical arcs such that all horizontal arcs are contained in $D^2\times \{\frac{1}{2}\}$ or $D^2\times \{-\frac{1}{2}\}$ and each component of $T\setminus (D^2 \times \{0\})$ contains at most one horizontal arc.
\end{lemma}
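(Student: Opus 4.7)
The plan is to proceed in two phases: first arrange the tangle into the prescribed polygonal form, then straighten the boundary compressing disks to be vertical.

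For the first phase, I would use a Morse-theoretic ambient isotopy of $V$ to put $T$ into the stated polygonal form. Since $D^2\times\{0\}$ is a disk bridge surface for $T$, every local maximum of $\phi|_T$ lies in $D^2\times(0,1)$ and every local minimum in $D^2\times(-1,0)$. Using a vector field that is vertical near $D^2 \times \{0\}$, I can push each maximum to height $+\tfrac{1}{2}$ and each minimum to height $-\tfrac{1}{2}$, then straighten the subarcs connecting these extrema to $D^2\times\{0\}$ into vertical segments, absorbing any horizontal translation into a single horizontal edge at height $\pm\tfrac{1}{2}$. Through-strands become entirely vertical, while each subarc of $T$ on one side of $D^2\times\{0\}$ bounded by two crossings of the bridge surface contributes exactly one horizontal arc. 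Transversality of $T$ to $D^2\times\{0\}$ is preserved throughout, and the disks in $\mc{D}^{\up}\cup\mc{D}^{\dn}$ are carried along, so they remain pairwise disjoint and still satisfy $\mc{D}^{\up}\cap\mc{D}^{\dn}=\nil$.

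For the second phase, fix $D\in\mc{D}^{\up}$; the treatment of $\mc{D}^{\dn}$ is symmetric. Its boundary decomposes as an arc on $D^2\times\{0\}$ and an arc on $\bdd V\cup\bdd\eta(T)$. After a small isotopy supported near $\bdd D$, both pieces become polygonal in the product coordinates. I then construct a model vertical disk $R$ bounded by this polygonal curve: explicitly, $R$ is swept out by vertical arcs running from the base arc on $D^2\times\{0\}$ up to the matching points on the rest of $\bdd D$. The region of $V$ above $D^2\times\{0\}$ cut off by the relevant portion of $T$ together with $R$ is a $3$-ball in which $D$ and $R$ are properly embedded disks sharing the same boundary, so an innermost-disk argument applied to $D\cap R$ produces an isotopy from $D$ to $R$ rel.\ boundary that does not cross $T$ and does not reintroduce intersections with $D^2\times\{0\}$.

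The main obstacle is carrying out this straightening for all disks at once without creating new intersections. I plan to order the disks and induct, at each stage straightening an outermost disk with respect to its intersection pattern with the remaining unprocessed disks, and performing the local isotopy inside a product subregion disjoint from them. Because $\mc{D}^{\up}$ lies above $D^2\times\{0\}$ and $\mc{D}^{\dn}$ below, the two collections cannot interfere with each other; within a single collection, the disjointness of their boundaries together with irreducibility of the ambient ball lets me isolate the isotopies. Since no isotopy in this second phase crosses $T$, the polygonal structure produced in the first phase is preserved, completing the proof.
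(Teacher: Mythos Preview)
Your two-phase plan is natural, but the second phase has a real gap in the construction of the model vertical disk $R$. After putting $T$ into polygonal form, each bridge above the bridge surface contributes a horizontal arc lying in $D^2 \times \{\tfrac12\}$. The disk you describe --- essentially $\alpha \times [0,1]$, swept out by vertical segments over the base arc $\alpha = D \cap (D^2 \times \{0\})$ --- meets $D^2 \times \{\tfrac12\}$ in $\alpha \times \{\tfrac12\}$, and nothing prevents this from crossing one of those horizontal bridge-arcs. When that happens $R$ is not contained in the complement of $T$, so your innermost-disk argument, which has to take place in the handlebody $(D^2 \times [0,1]) \setminus \eta(T)$, cannot even begin. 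A secondary problem is the inductive scheme: even if a vertical model for one $D \in \mc{D}^{\up}$ could be built in the complement of $T$, there is no reason it avoids the other disks of $\mc{D}^{\up}$, so the ``product subregion disjoint from them'' you want to work in need not exist; since the disks of $\mc{D}^{\up}$ are already pairwise disjoint there is no intersection pattern on which to base an ``outermost'' ordering.

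The paper's proof circumvents exactly this obstruction by introducing a complete collection $\Delta$ of \emph{vertical bridge disks} for $T \cap B^{\up}$. Rather than building a vertical model for each $D$ directly, it removes the arcs of $\Delta \cap \mc{D}^{\up}$ one at a time: an outermost arc on some $D \in \mc{D}^{\up}$ cuts off subdisks $E \subset D$ and $F \subset \Delta$, and an explicit ambient isotopy --- pushing the cylinder $G \times [-1,0]$ (with $G \subset D^2 \times \{0\}$) capped by the ball between $E$, $F$, and $G$ horizontally through $F$ --- removes the intersection while keeping $T$ polygonal, $\mc{D}^{\dn}$ vertical, and $\mc{D}^{\up} \cap \mc{D}^{\dn} = \emptyset$. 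Once $\mc{D}^{\up}$ misses all of $\Delta$, it lies in the genuine product region where $T$ consists solely of vertical through-arcs, and only then can all of $\mc{D}^{\up}$ be made vertical at once by an isotopy supported in $D^2 \times [\epsilon,1]$. The bridge-disk collection $\Delta$ is the missing ingredient in your argument.
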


\begin{proof}
Let $B^{\up}=D^2 \times [0,1]$ and $B^{\dn}=D^2 \times [-1,0]$. The proof follows the following basic structure. We begin by using standard techniques to construct an ambient isotopy so that $D^{\dn}$, $T\cap B^{\dn}$ and subsequently $T\cap B^{\up}$ have the desired properties. We then analyze the intersection between $D^{\up}$ and the vertical bridge disks for $T\cap B^{\up}$ to construct an ambient isotopy that results in $\mc{D}^{\up}$ being vertical and disjoint from $\mc{D}^{\dn}$ while preserving the desired properties of $D^{\dn}$, $T\cap B^{\dn}$ and $T\cap B^{\up}$.

First, perform an isotopy supported in a neighborhood of $B^{\dn}$ after which each arc of $T\cap B^{\dn}$ that is parallel into $D^2\times \{0\}$ consists of two vertical arcs and one horizontal arc in $D^2\times \{-\frac{1}{2}\}$ and all other arcs of $T\cap B^{\dn}$ are $I$ fibers of $D^2 \times [-1,0]$. Because the boundary of each disk of $\mc{D}^{\dn}$ consists of two vertical and two horizontal arcs, each disk is isotopic to a vertical disk. By the Isotopy Extension Theorem, these isotopies extend to a single ambient isotopy in a neighborhood of $B^{\dn}$ that fixes $T\cap B^{\dn}$ and results in $\mc{D}^{\dn}$ being vertical. Note that this ambient isotopy preserves $\mc{D}^{\up} \cap \mc{D}^{\dn}=\emptyset$.

Next, perform an isotopy supported in $V \setminus \eta(B^{\dn})$ (where $\eta(B^{\dn}
)$ is a regular neighborhood of $B^{\dn}$) after which each arc of $T\cap B^{\up}$ that is parallel into $D^2\times \{0\}$ consists of two vertical arcs and one horizontal arc in $D^2\times \{\frac{1}{2}\}$ and every other arc of $T \cap B^{\up}$ is an $I$ fiber of $D^2 \times [0,1]$. This isotopy does not affect $\mc{D}^{\dn}$ or $T \cap B^{\dn}$. After these isotopies, $T$ meets the conclusion of the theorem, with the exception that it remains to be shown that we can now perform an isotopy that guarantees that $\mc{D}^{\up}$ is a collection of vertical disks without affecting the conditions on $T$ and $\mc{D}^{\dn}$ that we have already imposed.

Let $\Delta$ be a complete collection of vertical bridge disks for $T \cap B^{\up}$. Consider $\Delta \cap \mc{D}^{\up}$. After an isotopy of $\mc{D}^{\up}$ supported in the interior of $B^{\up}$ which does not affect the tangle or $\Delta$, we may assume that this intersection consists only of arcs. Let $\alpha$ be an outermost arc of intersection on some disk $D \in \mc{D}^{\up}$. This arc cuts off a subdisk $E \subset D$ whose interior is disjoint from all disks in $\Delta$ and a subdisk $F$ from some disk in $\Delta$. Let $B$ be the ball cobounded by $E$, $F$ and a subdisk $G$ of $D^2 \times \{0\}$.

By standard transversality arguments, we can assume that there is $\epsilon >0$ so that $E \cap (D^2 \times [0, \epsilon])$ is vertical.  Let $\{\kappa_i\}$ be the set of all bridges contained in $B$. Since $\Delta$ is disjoint from $E$, the bridge disk of each $\kappa_i$ is disjoint from $E$ so there is a vertical isotopy of $\{\kappa_i\}$ relative to $\Delta$ and supported in the interior of $B$ after which arcs in $\{\kappa_i\}$ are contained in $D^2 \times [0, \epsilon]$. Now the interior of $B \cap (D^2 \times [\epsilon, 1])$ is a ball that is completely disjoint from the link and thus there is an isotopy shrinking $B \cap (D^2 \times [\epsilon, 1])$ so it lies in a neighborhood of $G \cup F$, as in Figure \ref{fig:isotopy}.
\begin{figure}[htb]
  \begin{center}
  \includegraphics[scale=0.25]{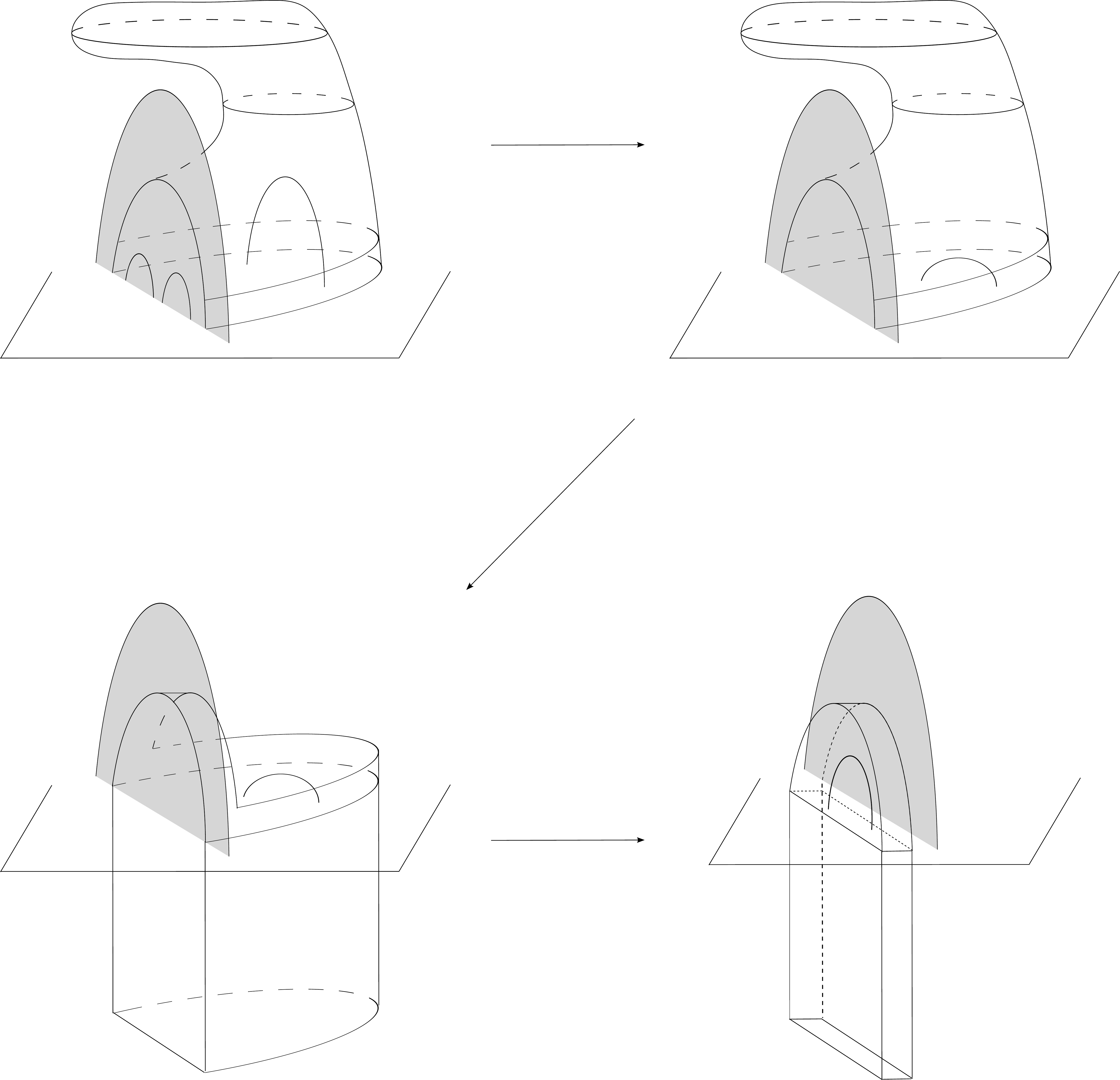}
  \put(-255,290){$E$}
  \put(-308,270){$F$}
  \put(-273,246){$G$}
  \put(-282,275){$\kappa_1$}
  \put(-60,275){$B$}
  \put(-270,50){$C$}
  \caption{In the first figure we have included some arcs of intersection between $\Delta$ and $\mc{D}^{\up}$ to remind the reader that $\alpha$ is not necessarily outermost on $\Delta$. We have dropped these arcs in the rest of the figures to simplify the picture.}
  \label{fig:isotopy}
  \end{center}
\end{figure}

Let $C$ be the cylinder $G \times [-1, 0]$ together with the ball $B$ after the above isotopies. The cylinder $G \times [-1, 0]$ may, of course, intersect $T$ both in its boundary and in its interior. Perform a horizontal isotopy of $C$, first shrinking it horizontally until $C \cap B^{\up}$ is contained in a neighborhood of $F$ and then pushing $C$ through $F$, thus reducing $\Delta \cap \mc{D}^{\up}$, as in Figure \ref{fig:isotopy}. This sequence of isotopies preserves the property that $T$ is composed of horizontal and vertical arcs and that $\mc{D}^{\dn}$ is vertical. Note also that these ambient isotopies preserve the property that $\mc{D}^{\up} \cap \mc{D}^{\dn}=\emptyset$.

Thus, we may assume that $\Delta \cap \mc{D}^{\up} = \emptyset$. However, if the collection of disks $\mc{D}^{\up}$ is contained in the complement of all vertical bridge disks then they are in fact contained in a product region and thus can be isotoped to be vertical via an isotopy that is supported in $D^2 \times [\epsilon, 1]$, does not affect $T$, and preserves the fact that $\mc{D}^{\up} \cap \mc{D}^{\dn}=\emptyset$.
\end{proof}

The following is a standard sweepout argument similar to that of \cite{G}.

\begin{lemma}
\label{lem:weakly reducible}
Let $(B,T)$ be a tangle with $B=(D^2\times [-1,1])$ such that $\partial T \subset (D^2\times \{-1,1\})$ and let $\Sigma$ be a bicompressible disk bridge surface for $T$. Let $\phi: D^2 \times [-1,1] \rightarrow [-1,1]$ be the natural projection map. Suppose $D$ is a compressing disk for $\bdd B$ in $B\setminus T$ such that $\partial D$ cannot be isotoped to be disjoint from $\partial \Sigma$. Then there is an isotopy of $D$ that fixes $\partial D$ and the height and number of saddles of $D$ after which one of the following occurs:
\begin{enumerate}
\item There exists a value $t$ of $\phi|_{D}$ such that $\phi^{-1}(t)$ is isotopic to $\Sigma$ and there exists a compressing disk for $\phi^{-1}(t)$ that is disjoint from  a compressing disk or boundary compressing disk for $\phi^{-1}(t)$ on the opposite side of $\phi^{-1}(t)$,

\item there exists a regular value $t$ of $\phi|_{D}$ such that $\phi^{-1}(t)$ is isotopic to $\Sigma$ and there are two disjoint boundary compressing disks for $\phi^{-1}(t)$ with boundary contained in $\phi^{-1}(t)\cap D$ and on opposite sides of $\phi^{-1}(t)$, or

\item there exists a critical value $t$ of $\phi|_{D}$ such that $\phi^{-1}(t)$ is isotopic to $\Sigma$ and there are two disjoint boundary compressing disks for $\phi^{-1}(t)$ on opposite sides of $\phi^{-1}(t)$ with the property that the boundaries of these disks meet $\phi^{-1}(t)$ in the boundary of a regular neighborhood of the component of $\phi^{-1}(t)\cap D$ that contains a saddle.
\end{enumerate}
\end{lemma}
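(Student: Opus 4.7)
The plan is to put $D$ in general position with respect to $\phi$ and run a sweepout-style argument on $D \cap \phi^{-1}(t)$ as $t$ varies. I would first perturb $D$ (fixing $\partial D$) so that $\phi|_D$ and $\phi|_{\partial D}$ are Morse, and minimize in that order the number of circle and arc components of $D \cap \phi^{-1}(t)$ for a generic $t$ with $\phi^{-1}(t)$ isotopic to $\Sigma$. All subsequent moves keep $D$ within this normal form, preserving the heights and the number of saddles of $\phi|_D$ as required by the statement. Because $\partial D$ cannot be isotoped off $\partial \Sigma$, there is a range of regular values $t$ for which $D$ has pieces both above and below $\phi^{-1}(t)$ and for which $D \cap \phi^{-1}(t)$ contains arcs.

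Next I would identify candidate weakening disks at each such regular level. An innermost circle of $D \cap \phi^{-1}(t)$ on $D$ bounds a subdisk of $D$ lying entirely above or below $\phi^{-1}(t)$ and with interior disjoint from $T$, hence a compressing disk for $\phi^{-1}(t)$ in $B \setminus T$. An outermost arc of $D \cap \phi^{-1}(t)$ on $D$ cuts off a subdisk of $D$ whose boundary is an arc on $\phi^{-1}(t)$ together with an arc on $\partial B$, i.e., a boundary compressing disk for $\phi^{-1}(t)$ whose arc of boundary on $\phi^{-1}(t)$ lies in $D \cap \phi^{-1}(t)$. Because $D$ extends on both sides of $\phi^{-1}(t)$, outermost and innermost subdisks exist on both sides simultaneously, so the substantive question is whether disjoint above/below choices can be made.

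I would then carry out the case analysis. If at some regular $t$ with $\phi^{-1}(t)$ isotopic to $\Sigma$ one can select an outermost subdisk of $D$ above and one below whose defining arcs or circles are disjoint in $D$, their disjoint interiors realize the weakening pair demanded by conclusion (1) (when one of the two uses an innermost circle) or conclusion (2) (when both use outermost arcs). If no such disjoint pair exists at any regular level, the obstruction must be localized at a critical value $t^{*}$ of $\phi|_D$ where outermost components reshuffle between the two sides of $\phi^{-1}(t)$. A maximum or minimum only creates or destroys a small circle component confined to one side of $\phi^{-1}(t)$ and cannot swap outermost data across the two sides, so $t^{*}$ must be a saddle. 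At such a saddle the component $c$ of $D \cap \phi^{-1}(t^{*})$ containing the saddle has a regular neighborhood in $D$ whose frontier arcs alternate on the two sides of $\phi^{-1}(t^{*})$, and picking one frontier arc above and one below cuts off two disjoint boundary compressing disks of the form required by conclusion (3). The main obstacle I expect is verifying disjointness of the resulting weakening disks, since circles and arcs of $D \cap \phi^{-1}(t)$ can be nested intricately on $D$; the minimality installed at the setup step, together with the precise saddle-neighborhood phrasing in conclusion (3), is what ultimately forces one of the three conclusions to hold.
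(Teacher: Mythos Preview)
Your approach is the right one---this is a sweepout argument, exactly as in the paper---but there is a genuine gap in the logic that makes the argument break down.

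The claim ``Because $D$ extends on both sides of $\phi^{-1}(t)$, outermost and innermost subdisks exist on both sides simultaneously'' is false. For example, if $D \cap \phi^{-1}(t)$ consists of two parallel arcs cutting $D$ into three pieces, the two outermost pieces can both lie above $\phi^{-1}(t)$ while only the middle (non-outermost) piece lies below. Worse, if your claim \emph{were} true the lemma would be trivial: any two outermost or innermost subdisks of $D$ are automatically disjoint, so you would immediately obtain conclusion (1) or (2) at every regular level and never need (3). The fact that (3) is in the statement should signal that outermost subdisks on both sides need not coexist.

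What is missing is the step that sets up the sweepout correctly. The paper does this by isotoping $T$, not $D$: it pushes all maxima of $T$ above every saddle of $\phi|_D$ and all minima of $T$ below every saddle. This guarantees that the interval $(a,b)$ between the highest minimum and lowest maximum of $T$ is exactly the range where $\phi^{-1}(t)$ is isotopic to $\Sigma$, and that for $t$ just above $a$ the outermost and innermost pieces of $D$ lie \emph{below} $\phi^{-1}(t)$, while for $t$ just below $b$ they lie \emph{above}. Now the sweepout has teeth: either some level in $(a,b)$ has outermost/innermost subdisks on both sides (giving (1) or (2)), or the side flips at a critical level of $\phi|_D$, which must be a saddle and which then yields (3). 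Your normalization of $D$ alone does not produce this dichotomy, and you also need to restrict attention to arcs and loops that are \emph{essential} in $\phi^{-1}(t)$ (inessential ones do not give compressing or boundary-compressing disks; the hypothesis that $\partial D$ cannot be isotoped off $\partial\Sigma$ is what guarantees an essential component exists at every level).
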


\begin{proof}
Consider the projection map $\phi: D^2 \times [-1,1] \rightarrow [-1,1]$. We have $\phi^{-1}(t) = D^2 \times \{t\}$ for each $t \in [-1,1]$ and we will assume $\Sigma = \phi^{-1}(0)$. Isotope all maxima of $T$ up above all critical values of $\phi|_D$ into a neighborhood of $D\times \{1\}$ and isotope all minima of $T$ down below all critical values of $\phi|_D$ into a neighborhood of $D^2 \times \{-1\}$. This isotopy fixes $\partial D$ and fixes the height and number of saddles of $D$. After this isotopy, each component of the intersection between $D$ and a neighborhood of $D^2 \times \{1\}$ is either a vertical boundary compressing disk or a compressing disk with level boundary and a single critical point corresponding to a maximum. Similarly, $D$ meets a neighborhood of $D^2 \times \{-1\}$ in a collection of vertical boundary compressing disks and compressing disks with level boundary and a single critical point corresponding to a minimum.

Let $a \in [-1,1]$ be the level of the highest local minimum of $T$ (with respect to $\phi$) and let $b$ be the level of the lowest local maximum. Then for every $t \in (a,b)$, the disk $\phi^{-1}(t)$ is isotopic to $\Sigma$ by an isotopy transverse to $T$. If for any $t \in (a,b)$ $\phi^{-1}(t)$ meets $D$ in a collection of arcs and loops that are all inessential in $\phi^{-1}(t)$, then $D$ can be isotoped to be disjoint from $\phi^{-1}(t)$. Hence, we will assume that for every $t \in (a,b)$  $\phi^{-1}(t)$ meets $D$ in at least one component that is essential in $\phi^{-1}(t)$.

For each fixed $t$, an innermost loop essential in $\phi^{-1}(t)$ or an outermost arc essential in $\phi^{-1}(t)$ of $D \cap \phi^{-1}(t)$ in $D$ will bound a subdisk of $D$ that can be isotoped to be either a compressing or boundary compressing disk for $\phi^{-1}(t)$. For $t$ near $a$, at least one of these disks will be below $\phi^{-1}(t)$, while for $t$ near $b$, at least one of these will be above $\phi^{-1}(t)$. If there is a value $t \in (a,b)$ with loops or arcs of intersection defining disks both above and below then these disks are disjoint (since $D$ is embedded) and we have one of conditions (1) or, in the case that both disks are boundary compressing disks, conclusion (2). Otherwise, there must be a critical point of $\phi|_D$ at which the disks switch from below to above. However, in this case the loops or arcs above and below the critical point correspond to disjoint arcs or loops in $\Sigma$, so we obtain conclusion (1) or, in the case that both disks are boundary compressing disks, conclusion (3).
\end{proof}

\begin{lemma}
\label{lem:weakly reducible2}
Let $(B,T)$ be a tangle with $B=(D^2\times [-1,1])$ such that $\partial T \subset (D^2\times \{-1,1\})$ and let $\Sigma$ be a bicompressible disk bridge surface for $T$. Let $\phi: D^2 \times [-1,1] \rightarrow [-1,1]$ be the natural projection map. Suppose $D$ is a compressing disk for $\bdd B$ in $B\setminus T$ such that $\partial D$ can be isotoped to be disjoint from $\partial \Sigma$. Assuming $\partial D \cap \partial \Sigma=\emptyset$, there is an isotopy of $D$ that fixes $\partial D$ and the height and number of saddles of $D$ after which one of the following occurs:
\begin{enumerate}

\item There exists a regular value $t$ of $\phi|_{D}$ such that $\phi^{-1}(t)$ is isotopic to $\Sigma$ and there are two disjoint compressing disks for $\phi^{-1}(t)$  on opposite sides of $\phi^{-1}(t)$,

\item There exists a critical value $t$ of $\phi|_{D}$ such that $\phi^{-1}(t)$ is isotopic to $\Sigma$ and there are two disjoint compressing disks for $\phi^{-1}(t)$ with boundaries contained in the boundary of a regular neighborhood of the component of $\phi^{-1}(t)\cap D$ that contains a saddle and on opposite sides of $\phi^{-1}(t)$,

\item For some $i\in \{-1,1\}$, $T\cap \phi^{-1}(i)=\emptyset$,

\item There is a strict subdisk of $\Sigma$ that is bicompressible.

\end{enumerate}
\end{lemma}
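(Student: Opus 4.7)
The plan is to adapt the sweepout argument of Lemma~\ref{lem:weakly reducible} to the new hypothesis $\partial D \cap \partial \Sigma = \emptyset$ and to the altered list of conclusions. If $T \cap \phi^{-1}(i) = \emptyset$ for some $i \in \{-1,1\}$, then conclusion~(3) already holds, so I assume throughout that $T$ has endpoints on both $D^2 \times \{1\}$ and $D^2 \times \{-1\}$.

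Following Lemma~\ref{lem:weakly reducible}, I isotope the maxima of $T$ above and the minima below every critical value of $\phi|_D$, fixing $\partial D$ and the height and number of saddles of $\phi|_D$. Let $a$ and $b$ be the levels of the highest minimum and lowest maximum of $T$; each $\phi^{-1}(t)$ with $t \in (a,b)$ is isotopic to $\Sigma$ transversely to $T$. Because $\partial D \cap \partial \Sigma = \emptyset$, $\partial D$ lies in one component of $\partial B \setminus \partial \Sigma$, WLOG the upper one $D^2 \times \{1\} \cup \partial D^2 \times (0, 1]$. Hence for $t \in (a, 0]$, $D \cap \phi^{-1}(t)$ consists only of closed curves, so innermost essential ones yield compressing (never merely boundary-compressing) subdisks of $D$, matching conclusions~(1) and~(2).

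If $D \cap \phi^{-1}(t)$ contains a component essential in $\phi^{-1}(t) \setminus T$ for every $t \in (a, b)$, I mimic the side-switching argument of Lemma~\ref{lem:weakly reducible}: innermost essential subdisks on $D$ lie above $\phi^{-1}(t)$ near $\max \phi|_D$ and below $\phi^{-1}(t)$ near $\min \phi|_D$, so the side switches at some intermediate $t^\star$. Arranging that the switch takes place at some $t^\star \in (a, 0]$ ensures both extracted subdisks are compressing, giving conclusion~(1) at a regular value or conclusion~(2) at a saddle critical value.

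Otherwise, $D$ may be isotoped to be disjoint from some $\phi^{-1}(t^\star)$, lying WLOG on the upper side. $D$ must still meet $\Sigma$: if not, the ball $W$ cut off by $D$ and a subdisk $\Delta_1$ of the upper half of $\partial B$ would lie entirely above $\Sigma$, so any strand of $T$ with endpoint in $\Delta_1$ would be trapped in $W$ with no minimum below $\Sigma$, contrary to bridge position; hence $\Delta_1$ would be unpunctured and $\partial D$ inessential, contradicting $D$ being a compressing disk. So $D \cap \Sigma$ is a nonempty collection of closed curves, and an innermost essential one yields a compressing disk $E$ for $\Sigma$ on one side. Combined with a compressing disk $E'$ for $\Sigma$ on the opposite side given by bicompressibility, a standard innermost-loop exchange between $\partial E$ and $\partial E'$ on $\Sigma$ produces a simple closed curve bounding compressing disks on both sides---a bicompressible strict subdisk of $\Sigma$, as in conclusion~(4). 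The hardest step, which I would spend most care on, is this exchange: ensuring the resulting curve is essential in $\Sigma \setminus T$, bounds a strict subdisk of $\Sigma$, and simultaneously bounds compressing disks above and below in $B \setminus T$.
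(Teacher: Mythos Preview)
Your Case-A sweepout imported from Lemma~\ref{lem:weakly reducible} does not work as stated. With $\partial D$ confined to the upper half of $\partial B$, the ``compressing disk from above'' that the sweepout needs near the top level is not automatic: the loop of $D\cap\phi^{-1}(t)$ nearest $\partial D$ bounds only an annulus in $D$ up to $\partial D$, not a disk, and innermost-on-$D$ subdisks lying above $\phi^{-1}(t)$ come only from interior maxima of $\phi|_D$, which $D$ may simply lack. Moreover, after your preparatory isotopy the level $a$ lies below $\min\phi|_D$, so $D\cap\phi^{-1}(t)=\emptyset$ for $t$ near $a$ and your Case~A hypothesis (essential intersection at every level of $(a,b)$) is never satisfied; the claim that the switch can be ``arranged'' in $(a,0]$ is likewise unjustified. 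The paper's dichotomy is exactly whether such interior-maximum caps are present in $D$ near $D^2\times\{1\}$; only when they are does the sweepout of Lemma~\ref{lem:weakly reducible} yield (1) or (2).

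The decisive gap is your route to conclusion~(4). An ``innermost-loop exchange'' between a compressing disk $E\subset D$ on one side of $\Sigma$ and a bicompressibility disk $E'$ on the other is not a well-defined move: since $E$ and $E'$ lie on opposite sides of $\Sigma$ they meet only in finitely many points of $\Sigma$, so there is no loop or arc of $E\cap E'$ to surger along, and nothing forces the outcome to be a strict bicompressible subdisk. The paper instead uses the global structure of $D$ in the annulus-only case: writing $D=A\cup D^*$ with $A$ a vertical annulus above some bridge level $\phi^{-1}(a)$ and $D^*$ a single disk below, the curve $\partial D^*\subset\phi^{-1}(a)$ is the separating curve. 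One then takes the compressing disks $E_1$ (above) and $E_2$ (below) supplied by bicompressibility and boundary-compresses $E_1$ along $A$ and $E_2$ along $D^*$, so that both become disjoint from $\partial D^*$. If they land in the same component of $\phi^{-1}(a)\setminus\partial D^*$, that component is the required bicompressible strict subdisk, giving~(4); different components give~(1); and $\partial D^*$ parallel to $\partial\phi^{-1}(a)$ gives~(3). This simultaneous use of both halves of $D$ to clear a common separating curve is the idea your plan is missing.
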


\begin{proof} Without loss of generality, we can isotope $\partial D$ to be contained in $\phi^{-1}(1)$ via an isotopy supported in a regular neighborhood of $\partial B$. Note that there can be no arcs of intersection between $D$ and $\phi^{-1}(t)$ for any $t$. Isotope all maxima of $T$ up above all critical points of $\phi|_D$ into a neighborhood of $D\times \{1\}$ and isotope all minimum of $T$ down below all critical points of $\phi|_D$ into a neighborhood of $D^2 \times \{-1\}$. This isotopy fixes $\partial D$ and fixes the height and number of saddles of $D$. After this isotopy, the collection of components of the intersection between $D$ and a neighborhood of $D^2 \times \{1\}$ is the union of a single vertical annulus with compressing disks, each having level boundary and a single critical point which is a maximum. Similarly, $D$ meets a neighborhood of $D^2 \times \{-1\}$ in a collection of compressing disks with level boundary and a single critical point corresponding to a minimum.

If the intersection between $D$ and a neighborhood of $D^2 \times \{1\}$ contains a compressing disk, then by the proof of Lemma \ref{lem:weakly reducible} we can conclude (1) or (2) holds.

If the intersection between $D$ and a neighborhood $D^2 \times [a,1]$ of $D^2 \times \{1\}$ is a single vertical annulus $A$ , then $D$ meets the portion of $B$ below $\phi^{-1}(a)$ in a single compressing disk $D^*$. If $\partial D^*$ is isotopic to $\partial \phi^{-1}(a)$, then (3) holds, so we can assume that $\partial D^*$ is not isotopic to $\partial \phi^{-1}(a)$. Since $\Sigma$ is bicompressible, let $E_1$ be a compressing disk for $\phi^{-1}(a)$ above $\phi^{-1}(a)$ and let $E_2$ be a compressing disk for $\phi^{-1}(a)$ below $\phi^{-1}(a)$. Boundary compress $E_1$ along $A$ and boundary compress $E_2$ along $D^*$ to produce two new compressing disks for $\phi^{-1}(a)$ each of which is disjoint from $\partial D^*$. If the boundaries of these disks lie in distinct components of the complement of $\partial D^*$ in $\phi^{-1}(a)$, then conclusion (1) holds. If the boundaries of these disks lie in a common component of the complement of $\partial D^*$ in $\phi^{-1}(a)$, then conclusion (4) holds.
\end{proof}

The following is known as the pop-over lemma and has been used in a number of papers. We refer the reader who is interested in the proof to Lemma 2.3 of~\cite{BT1}.

\begin{lemma}
\label{lem:popover}
Suppose that a tangle $T \subset D^2 \times [0,1]$ has $\boundary T \subset D^2 \times \{0\}$ and, under the projection to $[0,1]$ only has maxima. Then there is an isotopy $\psi$ fixing $\boundary (D^2 \times [0,1])$ so that $\psi(T)$ has the same number of maxima as $T$ and so that there is a strand $\alpha$ of $T$ that contains the highest maximum of $T$, meets a collar neighborhood $N$ of $\boundary (D^2 \times [0,1])$ in two vertical arcs and a horizontal arc and meets the complement of $N$ in $D^2 \times [0,1]$ in a vertical arc as in Figure \ref{fig:popover}.
\end{lemma}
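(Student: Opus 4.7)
The plan is to use a bridge disk for the strand $\alpha$ containing the highest maximum and then push that strand along the disk into a standard arched position. Since $T$ has only maxima and $\partial T \subset D^2 \times \{0\}$, each strand of $T$ is an arc with both endpoints on $D^2\times\{0\}$ and, by Morse theory, exactly one critical point, a maximum. Hence every strand is a bridge arc, so in particular the strand $\alpha$ containing the highest maximum admits a bridge disk $\Delta$ with $\partial\Delta=\alpha\cup\beta$, where $\beta$ is an arc in $D^2\times\{0\}$.

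First I would isotope $T$ so that the interior of $\Delta$ meets $T\setminus\alpha$ only in essential arcs with both endpoints on $\beta$, and then eliminate these intersections by an outermost-arc surgery. An outermost such arc cuts off a disk in $\Delta$ that is a bridge disk for a subarc of some other strand; since the maximum of that strand is below the maximum of $\alpha$, we can push the subarc through the outermost disk by a small isotopy that does not create new maxima and does not move $\partial(D^2\times[0,1])$. Repeating, we may assume $\Delta$ has interior disjoint from $T\setminus\alpha$. Then $\alpha\cup\beta$ bounds an embedded disk whose interior is also disjoint from the rest of $T$, so $\alpha$ is isotopic to $\beta$ rel $\partial\alpha$ in the complement of $T\setminus\alpha$, and in particular $\alpha$ can be made to lie in an arbitrarily small neighborhood of $\beta$ in $D^2\times\{0\}$, losing its maximum temporarily.

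Now I would choose a concrete ``pop-over'' arc $\alpha''$ with the same endpoints $p_1,p_2$ as $\alpha$: from $p_1$ go vertically up through the interior to a level $1-\delta$ inside $N$; then a horizontal arc inside the top-face collar of $N$ connecting to a point near $\partial D^2\times\{1-\delta\}$; then a vertical arc down through the side-face collar of $N$ to $p_2$. This arc meets $N$ in two vertical arcs and one horizontal arc and meets the complement of $N$ in a single vertical arc, and it has exactly one maximum (at its horizontal segment). Since $D^2\times[0,1]$ is simply connected and $T\setminus\alpha$ touches $\partial(D^2\times[0,1])$ only on $D^2\times\{0\}$, the arcs $\alpha$ (now pushed near $\beta$) and $\alpha''$ are isotopic rel endpoints in the complement of $T\setminus\alpha$; replacing $\alpha$ by $\alpha''$ through this isotopy gives the desired $\psi$, which fixes $\partial(D^2\times[0,1])$ and does not change the number of maxima of $T$.

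The main obstacle is the first step, namely cleaning up the bridge disk $\Delta$ so that its interior is disjoint from $T\setminus\alpha$ without creating new maxima; the surgery has to be carried out in a way that respects the Morse data on each strand, which is exactly where the hypothesis that $\alpha$'s maximum is the \emph{highest} is used. Once $\Delta$ has been cleaned up, the routing argument for $\alpha''$ is essentially formal, since outside of $D^2\times\{0\}$ the rest of $T$ presents no obstruction to threading an arc along the boundary collar.
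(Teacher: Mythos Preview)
The paper does not actually prove this lemma; it simply refers the reader to Lemma~2.3 of \cite{BT1}. So there is no in-paper argument to compare against, and I will assess your proposal on its own.

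Your overall plan---find a bridge disk for the highest strand $\alpha$ whose interior misses $T\setminus\alpha$, then slide $\alpha$ along it into the pop-over position---is the right one. But the first step contains a genuine dimensional error. You write that the interior of $\Delta$ meets $T\setminus\alpha$ in ``essential arcs with both endpoints on $\beta$'', and then run an outermost-arc argument on these. That cannot happen: $T\setminus\alpha$ is a $1$--manifold and $\Delta$ is an embedded $2$--disk in a $3$--manifold, so their transverse intersection is a finite set of \emph{points}, not arcs. Your outermost-arc surgery and the phrase ``a bridge disk for a subarc of some other strand'' therefore have no content as written, and the assertion that the resulting push ``does not create new maxima'' is exactly the delicate point that needs an argument.

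The standard repair is to work with the \emph{other bridge disks} rather than with the strands. Since each strand of $T$ has exactly one maximum, $T$ is a trivial tangle and admits a complete collection of pairwise disjoint bridge disks $\Delta_1,\dots,\Delta_n$; disjointness is obtained by the usual innermost-circle/outermost-arc argument applied to $\Delta_i\cap\Delta_j$ (which \emph{are} arcs and circles, being intersections of two surfaces). With such a collection in hand, the disk for the highest strand is already disjoint from $T\setminus\alpha$, and one can isotope $T$ so every strand consists of two vertical segments and a horizontal cap at its own height. Your final step also needs a word of care: the complement of $T\setminus\alpha$ in $D^2\times[0,1]$ is \emph{not} simply connected, so that is not the reason the pushed-down $\alpha$ and your model arc $\alpha''$ are isotopic rel endpoints. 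Rather, once the other strands are standardized and lie strictly below the height of $\alpha$'s maximum, there is an explicit embedded disk in the complement of $T\setminus\alpha$ (sweep the bridge disk for $\alpha$ up and around through the collar) guiding $\alpha$ to $\alpha''$.
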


\begin{figure}[htb]
  \begin{center}
  \includegraphics[width=1.5in]{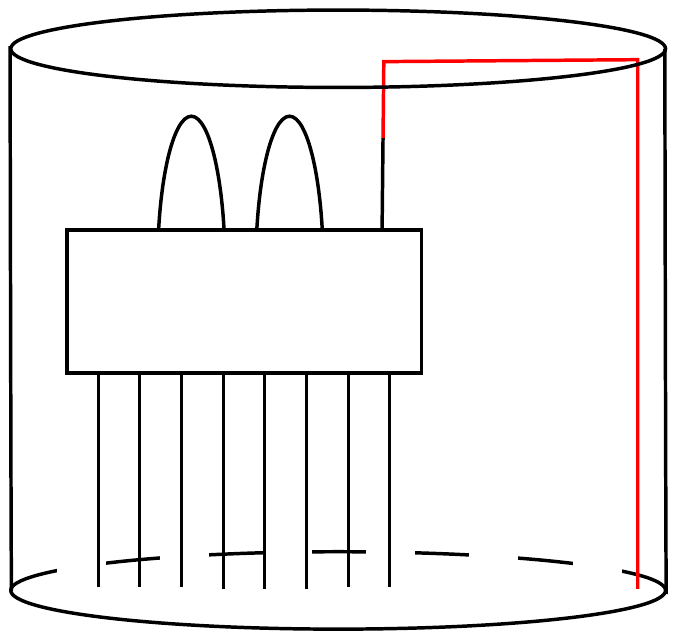}
  \caption{}
  \label{fig:popover}
  \end{center}
\end{figure}

\section{Distance 2 links}
\label{sect:proof}

\begin{proof}[Proof of Theorem~\ref{thm:main}]
Let $B^{\up}$ and $B^{\dn}$ be the two balls bounded by $\Sigma$ in $S^3$. Let $\gamma_1, \gamma_2, \gamma_3$ be a sequence of essential simple closed curves in $\Sigma$ such that $\gamma_i$ is disjoint from $\gamma_{i+1}$ for $i=1,2$ and such that $\gamma_1$ bounds a compressing disk in $B^{\up}$ and $\gamma_3$ bounds a compressing disk in $B^{\dn}$. The curve $\gamma_2$ separates $\Sigma$ into two disks. If $\gamma_1$ and $\gamma_3$ are in distinct disk components of $\Sigma \setminus \gamma_2$ then they are disjoint and $d(\Sigma) \leq 1$. Thus $\gamma_1$ and $\gamma_3$ must be contained in a single disk $F \subset \Sigma$ with boundary $\gamma_2$. Let $F^c$ be the closure of the complement of $F$ in $\Sigma$. In general, there may be many different distance-two paths for $\Sigma$ and we will assume that $\gamma_1$, $\gamma_2$ and $\gamma_3$ have been chosen amongst all such triples of curves so that $|F \cap L|$ is minimal.

Let $C$ be the boundary of a regular neighborhood $H^{in}$ of $F$. Isotope $L$ relative to $\Sigma$ so as to minimize $|C\cap L|$ without creating any new minima or maxima, as in Figure \ref{fig:general form}. Let $H^{out}$ be the closure of the complement in $S^3$ of $H^{in}$. Note that $H^{in}$ and $H^{out}$ are both balls bounded by the sphere $C$ such that $H^{in}$ contains $F$ and $L \cap H^{in}$ is a tangle in $H^{in}$. Moreover, we can naturally identify $H^{in}$ with $D^2 \times [-1,1]$ so that $F$ is a disk bridge surface for $(H^{in}, L\cap H^{in})$.
\begin{figure}[htb]
  \begin{center}
  \includegraphics[scale=0.6]{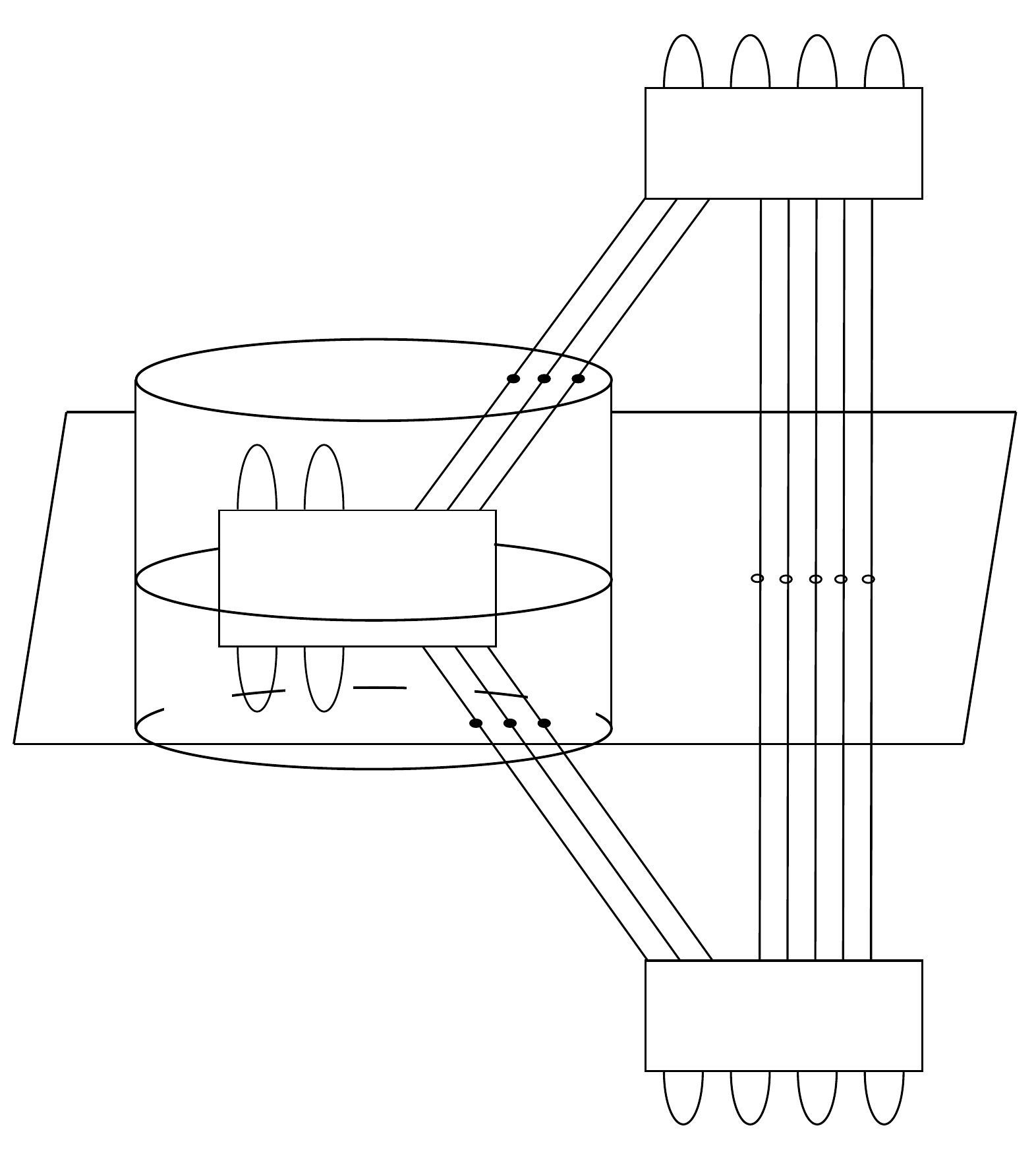}
  \put(-35,150){$\Sigma$}
  \put(-30,230){$B^{\up}$}
  \put(-30,70){$B^{\dn}$}
  \put(-180,200){$C$}
  \put(-130,150){$F$}
  \put(-100,150){$F^c$}
  \put(-230,180){$H^{in}$}
  \put(-280,224){$H^{out}$}
  \caption{}
  \label{fig:general form}
  \end{center}
\end{figure}

\medskip

{\bf Claim 0:} $F^c$ is incompressible in $H^{out}$.

\emph{Proof of Claim 0:} Suppose that $E$ is a compressing disk for $F^c$. Since both $\gamma_1$ and $\gamma_2$ are disjoint from $F^c$ then there is a compressing disk for $\Sigma$ that is disjoint from $E$ and on the opposite side of $\Sigma$ from $E$. Hence, $\Sigma$ is distance at most one, a contradiction.
\qed (Claim 0)

\medskip

{\bf Claim 1:} $C \cap B^{\up}$ is incompressible in $B^{\up}$ and $C \cap B^{\dn}$ is incompressible in $B^{\dn}$.

\emph{Proof of Claim 1:} Assume for contradiction $D$ is a compressing disk for $C$ in $B^{\up}$. Then $\partial D$ also bounds a disk $\Delta$ in $C\cap B^{\up}$. The union $D \cup \Delta$ is a ball $B'$ and each arc of $L$ in $B'$ contains exactly one maximum since $\Sigma$ is a bridge sphere for $L$. Hence, $L \cap B'$ is a trivial tangle in $B'$. Moreover, the endpoints of each strand of $L \cap B'$ are in $\Delta$, so we can isotope a strand $\beta$ of $L\cap B'$ into $\Delta$. After pushing $\beta$ just past $\Delta$ and into or out of  $H^{in}$, $C$ will meet $L$ in two fewer points, contradicting our minimality assumption. The proof that $C \cap B^{\dn}$ is incompressible in $B^{\dn}$ is essentially the same.
\qed (Claim 1)

\medskip

{\bf Claim 2:} As a properly embedded surface in $H^{in}$, $F$ does not have a compressing disk on one side that is disjoint from a compressing or a boundary compressing disk on the opposite side.

\emph{Proof of Claim 2:} If there are two disjoint compressing disks on opposite sides of $F$ then $\Sigma$ is a bridge sphere of $L$ with distance at most one, contrary to our hypothesis.

Suppose there is a compressing disk $\bar{D}$ for $F$ on one side, say in $B^{\up}\cap H^{in}$, that is disjoint from a boundary compressing disk $\Delta$ for $F$ in $B^{\dn}\cap H^{in}$, as in Figure \ref{fig:reduction}. Let $D_3$ be the compressing disk that $\gamma_3$ bounds in $B^{\dn}$. By Claim 1, we may assume that this disk is disjoint from $C$ and therefore contained in $B^{\dn}\cap H^{in}$. We can eliminate intersections between $\Delta$ and $D_3$ by compressing and boundary compressing $D_3$ along $\Delta$. One of the resulting components will be a compressing disk $D^*$ for $F$ in $B^{\dn}\cap H^{in}$ that is disjoint from $\Delta$. If $\bar{D}$ is on the opposite side of $\Delta$ from $D^*$, then $D^*$ and $\bar D$ are disjoint so the distance of $\Sigma$ is at most one, contradicting the assumption. Thus, $\bar{D}$ and $D^*$ are on the same side of $\Delta$.
\begin{figure}[htb]
  \begin{center}
  \includegraphics[scale=0.4]{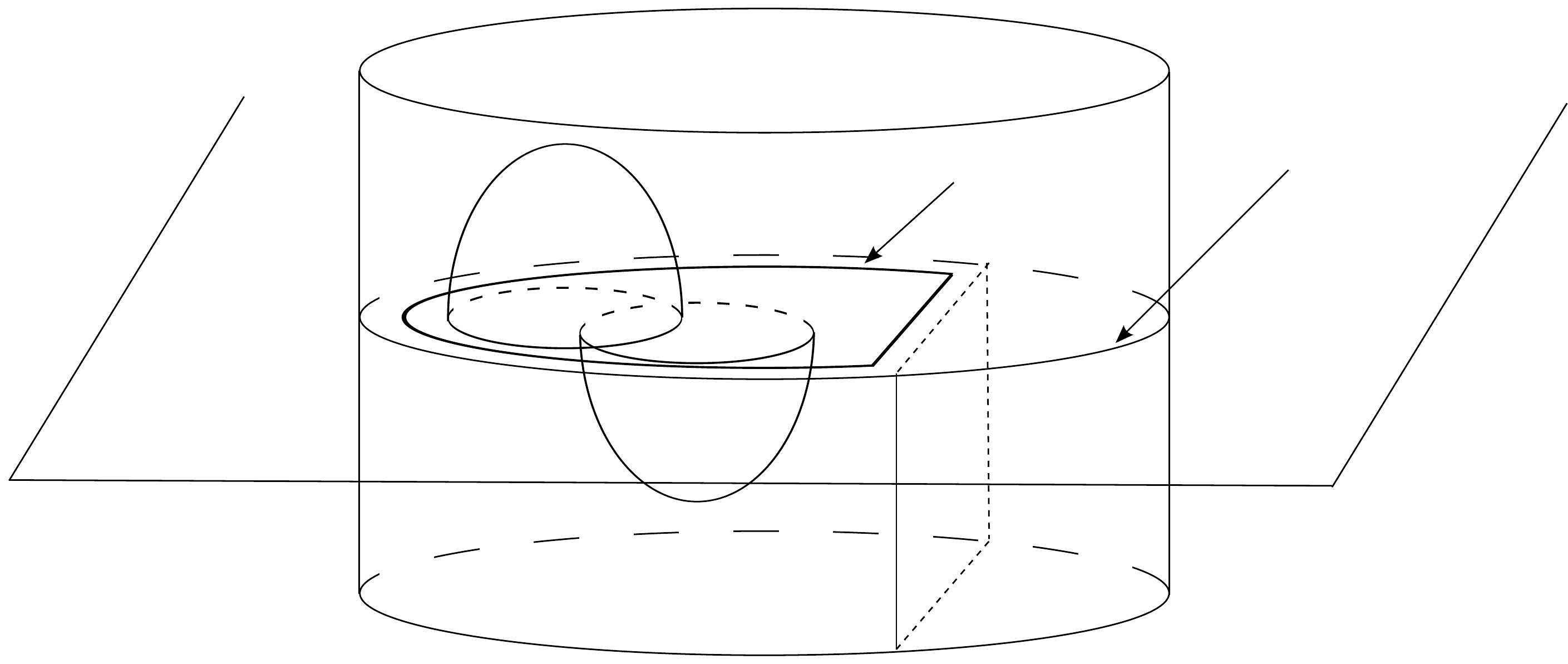}
  \put(-212,90){$\bar{D}$}
   \put(-133,40){$\Delta$}
   \put(-185,43){$D^*$}
      \put(-132,105){${\gamma_2}'$}
       \put(-60,110){${\gamma_2}$}
  \caption{}
  \label{fig:reduction}
  \end{center}
\end{figure}

Consider the curve ${\gamma_2}'$ that is contained in the boundary of a regular neighborhood of $\gamma_2 \cup (\Delta \cap F)$ in $F$ and separates $\gamma_2$ from $\partial D^*$, as in Figure \ref{fig:reduction}. Let $F'$ be the disk in $\Sigma$ bounded by $\gamma'_2$ that contains $\bdd \bar{D}$ and $\partial D^*$. Then $|F' \cap L| < |F \cap L|$, contradicting the minimality assumption on the path $\gamma_1, \gamma_2, \gamma_3$.
\qed(Claim 2)

\medskip

\begin{remark}\label{minsandmaxes}
Note that if $F$ has disjoint boundary compressing disks $\Delta^{\up}$ and $\Delta^{\dn}$ on opposite sides of $F$, then we know where the critical points of the tangle $T \cap H^{in}$ occur with respect to $\Delta^{\up}$ and $\Delta^{\dn}$. If the tangle has maxima on both sides of $\Delta^{\up}$ then we can find compressing disks associated to each of these maxima that are disjoint from $\Delta^{\up}$ and occur on both sides of $\Delta^{\up}$ by taking the frontier of regular neighborhoods of bridge disks that have been chosen to be disjoint from $\Delta^{\up}$. Since $\Delta^{\dn} \cap F$ is on one side or the other of $\Delta^{\up}\cap F$ in $F$, this implies that at least one of these compressing disks is disjoint from $\Delta^{\dn}$. Thus by Claim 2, every maximum of the tangle must be on the same side of $\Delta^{\up}$ as $\Delta^{\dn}$. Symmetrically, every minimum of the tangle must be on the same side of $\Delta^{\dn}$ as $\Delta^{\up}$.
\end{remark}

\medskip

{\bf Claim 3:} If $C$ is compressible in $H^{out}$, then $L$ is obtained from a link $L'$ via banding where $L'$ has a bridge sphere with the same bridge number as $\Sigma$, but with distance at most one.

\emph {Proof of Claim 3:}
Suppose $D$ is a compressing disk for $C$ contained in $H^{out}$ that intersects $\Sigma$ minimally amongst all such disks. If $D \cap \Sigma$ contains a simple closed curve, then an innermost loop of intersection in $D$ bounds a compressing disk for $\Sigma$ that is disjoint from $\gamma_1$ and $\gamma_3$, so the distance of $\Sigma$ would be at most one, contradicting our hypothesis. Also, by Claim 1, $D \cap \Sigma \neq \emptyset$. Thus, we can conclude that $D \cap \Sigma$ is a non-empty collection consisting entirely of arcs.
\begin{figure}[htb]
  \begin{center}
  \includegraphics[scale=0.18]{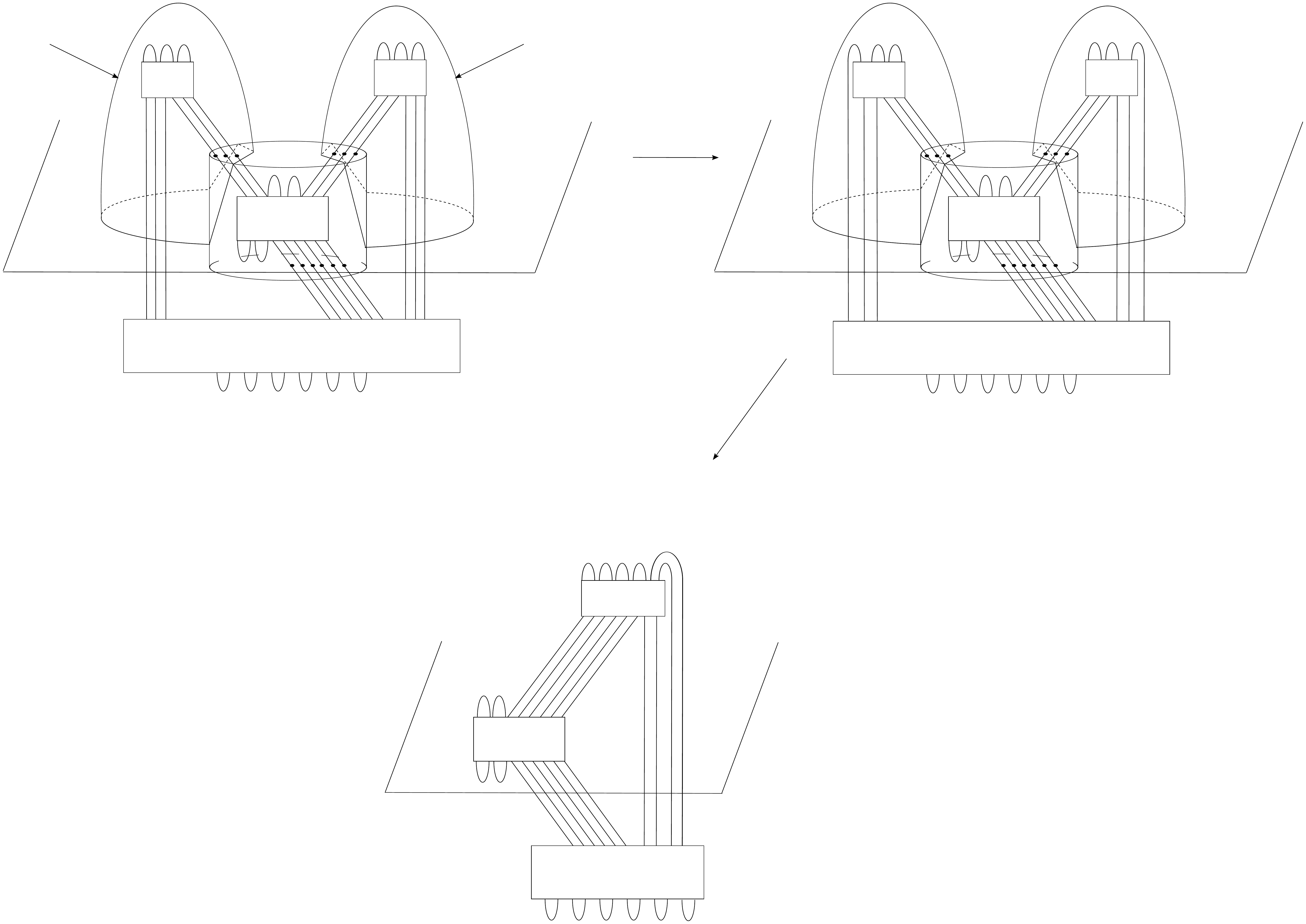}
    \put(-343,225){$\Delta_1$}
   \put(-200,225){$\Delta_2$}
     \put(-310,200){$B_1$}
   \put(-227,200){$B_2$}
  \caption{}
  \label{fig:BoundaryCompression}
  \end{center}
\end{figure}

Let $\Delta$ be any compressing disk for $\partial (B^{\up} \cap H^{out})$ or for $\partial (B^{\dn} \cap H^{out})$ that decomposes $B^{\up} \cap H^{out}$ or $B^{\dn} \cap H^{out}$ into two 3-balls $B_1$ and $B_2$ such that both $\partial B_1$ and $\partial B_2$ have non-trivial intersection with $L\cap \Sigma$. In the current situation, we could let $\Delta$ be a disk cut from $D$ by an outermost arc $\alpha$ of $D \cap \Sigma$. However, we will need the full generality of the definition of $\Delta$ in Claim 5.

Choose a collection of bridge disks for the arcs of $L$ above $\Sigma$ which is disjoint from $\Delta$. Use these bridge disks to lower all bridges of $L$ above $\Sigma $ below any critical points of $\Delta$. Now we can isotope $\Delta$ to have a single maximum by an isotopy that does not affect the link. We can then isotope $L$ to its original position without introducing any new critical points in $\Delta$. Therefore we may assume that after an isotopy $\Delta$ has a single maximum in its interior.

Let $\Delta_1$ and $\Delta_2$ be two copies of $\Delta$ with the point at infinity between them, as in the top row of Figure \ref{fig:BoundaryCompression}.  After thickening $\Delta$ and an isotopy, $B_1$ and $B_2$ are the balls indicated in the top left of the Figure bounded by $\Delta_1$ and $\Delta_2$, respectively, and disjoint disks in $\Sigma$. Then $T_i=B_i \cap L$ is a tangle that only has maxima so we can apply Lemma \ref{lem:popover} to pull one strand with at least one endpoint in $\Sigma$ out of the braid in each of $B_1$ and $B_2$ as indicated in the upper right picture of Figure \ref{fig:BoundaryCompression}. Our definition of $\Delta$ guarantees that such strands exist. These two strands then form a band as indicated in the last picture. The link $L'$ produced by undoing this band, the inverse of the process depicted in Figure \ref{fig:banding}, continues to have bridge surface $\Sigma$. However, $\Sigma$ now has an obvious pair of disjoint compressing disks on opposite sides. Therefore $L'$ has a bridge surface with the same bridge number as $L$, but distance at most one. (Note that this bridge surface may be the result of perturbing a bridge surface with strictly lower bridge number.)
\qed(Claim 3)

\medskip

Both Claim 4 and Claim 5 derive consequences from the assumption that $C$ is compressible in $H^{in}$.  It is necessary to use the conclusions of these claims in tandem to prove the theorem.

\medskip

{\bf Claim 4:} If $C$ is compressible in $H^{in}$, then $H^{in} \cap L$ is a bridge-split Montesinos tangle or $L$ is obtained from a link $L'$ via banding where $L'$ has a bridge sphere with the same bridge number as $\Sigma$, but with distance at most one.

\emph{Proof of Claim 4:} Suppose $D$ is a compressing disk for $C$ in $H^{in}$. If $\partial D$ can be isotoped to be disjoint from $\partial F$, then we can apply Lemma \ref{lem:weakly reducible2} and we arrive at a contradiction to Claim 2 if conclusion (1) or (2) holds. If (3) of Lemma \ref{lem:weakly reducible2} holds, then we contradict Claim 0. If (4) of Lemma \ref{lem:weakly reducible2} holds, then we contradict how we chose the curves $\gamma_1, \gamma_2, \gamma_3$ by finding a smaller bicompressible subdisk of $\Sigma$.

Thus, we can assume that $\partial D$ cannot be isotoped to be disjoint from $\partial F$. Apply Lemma \ref{lem:weakly reducible} to conclude that there are two cases to consider. If conclusion (1) of Lemma \ref{lem:weakly reducible} holds, then we contradict Claim 2. Thus, we may assume conclusion (2) or (3) holds. Hence, $F$ has a pair of disjoint boundary compressing disks on opposite sides. Let $\mc{D}^{\up}$ and $\mc{D}^{\dn}$ be these disks such that $\mc{D}^{\up}$ lies above $F$ and $\mc{D}^{\dn}$ lies below $F$.   By Lemma \ref{lem:vertical boundary compressing disks}, we may assume that these disks are vertical and by Remark \ref{minsandmaxes}, every compressing disk disjoint from $\mc{D}^{\up}$ must be contained in the component of $F \setminus \mc{D}^{\up}$ that contains $\mc{D}^{\dn}$ and vice versa. Let $R'$ be the result of boundary compressing $F$ along $\mc{D}^{\up}\cup \mc{D}^{\dn}$. By the above argument, some component $R_0$ of $R'$ must separate all the minima from all the maxima of the tangle.

Assume $R_0$ is boundary parallel in $H^{in}$. This boundary parallelism induces a product structure of the form $R_0\times I$ on at least one of the two components of $H^{in}\setminus R_0$. Since $F$ is bicompressible, some strand of $L \cap H^{in}$ contains a maximum and some strand contains a minimum. Without loss of generality, the component of $H^{in}\setminus R_0$ that contains maxima, $F^{\up}$ inherits the product structure. Let $\alpha$ be a strand of $L \cap F^{\up}$ that contains a maximum. The product structure of $F^{\up}$ allows for a level preserving isotopy of $L$ that fixes $L$ away from $\alpha$ and results in $\alpha$ being pulled out of $H^{in}$ and into $H^{out}$, as in leftmost isotopy of Figure \ref{fig:R_0BoundaryPara}. Apply Lemma \ref{lem:popover} to pull one strand with at least one endpoint in $F^c$ out of $L\cap B^{\up}\cap H^{out}$, as in the middle isotopy in Figure \ref{fig:R_0BoundaryPara}. This strand and $\alpha$ cobound a band, as illustrated in the right most isotopy in Figure \ref{fig:R_0BoundaryPara}. Hence, $L$ is obtained from a link $L'$ via banding where $L'$ has a bridge sphere with the same bridge number as $\Sigma$, but with distance at most one. Since we arrive at the same conclusion independent of which component of $H^{in}\setminus R_0$ illustrates the boundary parallelism, we can assume that $R_0$ is not boundary parallel in $H^{in}$.

\begin{figure}[htb]
  \begin{center}
  \includegraphics[width=5in]{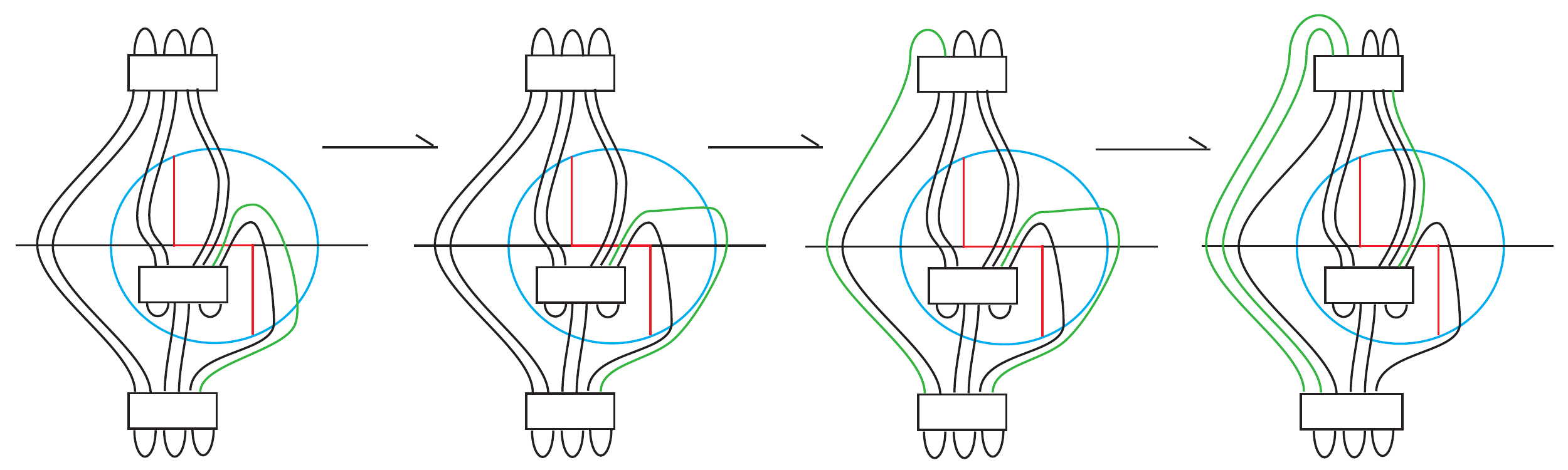}
  \caption{If $R_0$ is boundary parallel, then $L$ is banded.}
  \label{fig:R_0BoundaryPara}
  \end{center}
\end{figure}

We can think of $R_0$ as a subdisk of $F$ together with vertical flaps corresponding to $\mc{D}^{\up}$ and $\mc{D}^{\dn}$, as in Figure~\ref{fig:extending}. Hence, after a small tilt of the portion of $R_0$ in $F$, the foliation of $R_0$ induced by the restriction of the projection of $H^{in}\cong D^2 \times I$ onto its $I$-factor is a collection of parallel arcs. To demonstrate that $H^{in} \cap L$ is a bridge-split Montesinos tangle, it only remains to show that $R_0$ is incompressible in $H^{in}$.

Suppose $E$ is a compressing disk for $R_0$. After an isotopy, $E$ becomes a compressing disk for $F$ with boundary disjoint from $\alpha=\mc{D}^{\up}\cap F$ and $\beta=\mc{D}^{\dn}\cap F$. Without loss of generality, assume $E$ is above $F$. Since $F$ is bicompressible in $H^{in}$ by construction, there is a compressing disk for $F$ contained below $F$. By compressing and boundary compressing this compressing disk along subdisks of $\mc{D}^{\dn}$, we obtain a compressing disk $E^b$ for for $F$ contained below $F$, with boundary disjoint from $\beta$. $\partial E^b$ must be contained in the same component of $F\setminus \beta$ as $\alpha$, otherwise we contradict Claim 2. Since the boundary of $E^b$ and $\partial E$ both are contained in a subdisk of $F$ that meets $L$ in strictly fewer points, we arrive at a contradiction to how we chose $\gamma_1, \gamma_2, \gamma_3$. Thus, $R_0$ is incompressible.
\begin{figure}[htb]
  \begin{center}
  \includegraphics[scale=0.35]{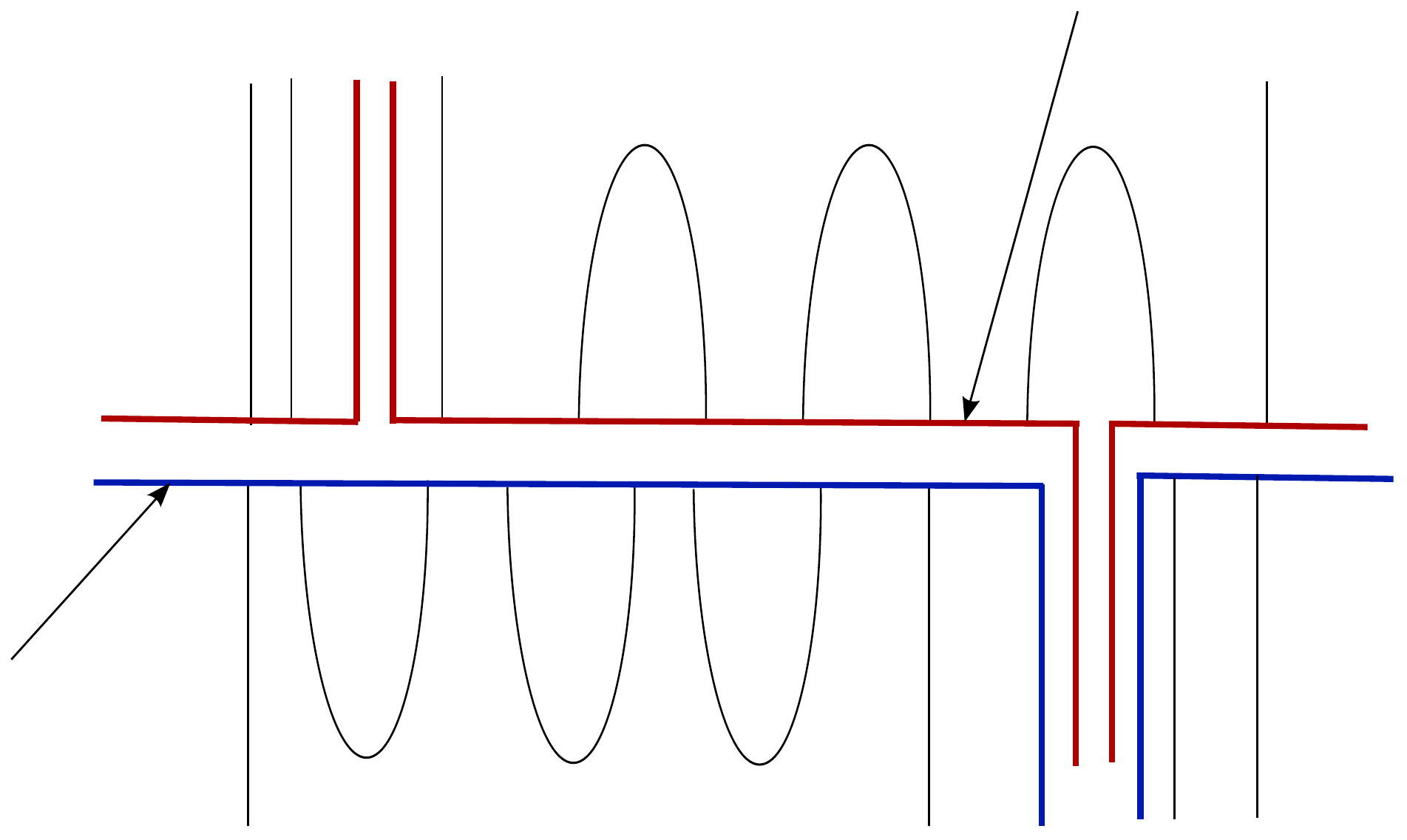}
   \put(-50,120){$R_0$}
  \caption{}
  \label{fig:extending}
  \end{center}
\end{figure}

\qed(Claim 4)

\medskip

{\bf Claim 5:} If $C$ is compressible in $H^{in}$, then at least one of the following holds:

\begin{enumerate}
\item the exterior of $L$ contains an incompressible meridional planar surface,

\item $H^{in} \cap L$ is rational,

\item $L$ is obtained from a link $L'$ via banding where $L'$ has a bridge sphere with the same bridge number as $\Sigma$, but with distance at most one,

\item the exterior of $L$ in $H^{out}$ contains an incompressible, boundary incompressible annulus.
\end{enumerate}

The proof of the following claim is inspired by the work of Rubinstein and Thompson~\cite{RT} on the classification of distance-two Heegaard splittings and the proof of Lemma 3.2 in~\cite{CR}.

\emph {Proof of Claim 5:} By Claim 3, we can assume that $C$ is incompressible in $H^{out}$. Let $\mc{E}$ be a maximal collection of compressing disks for $C$ in $H^{in}$. Let $N$ be the regular neighborhood of $C\cup \mc{E}$. If any of the boundary components of $N$ are 2-spheres bounding balls in $H^{in}$ disjoint from $L$ or are boundary parallel twice punctured 2-spheres, then fill these components with the corresponding 3-balls or 3-balls containing unknotted arcs to form the 3-manifold $M$. Let $\partial_+ M=\partial M -C$. If $\partial_+ M= \emptyset$, then $H^{in} \cap L$ is rational and the claim holds. Hence, we can assume that $\partial_+ M \neq \emptyset$. Since $\mc{E}$ was maximal, $\partial_+ M$ is incompressible to the side not containing $C$. $M$ is a punctured 3-ball that meets $L$ in a collection of arcs isotopic into $C$ or connecting $C$ to $\partial_+ M$. Thus, $\partial_+ M$ is incompressible in $M-L$.

Suppose that $\partial_+ M$ is compressible in the exterior of $L$ with compressing disk $D$. Isotope $D$ to be transverse to $C\cup F^c$ so that $D \cap (C\cup F^c)$ consists of loops and 3-valent graphs in $D$. Assume that we have isotoped $D$ to meet $C\cup F^c$ in a minimal number of edges, where we think of each loop component as consisting of a single vertex and a single edge. Since $\partial_+ M$ is incompressible in $M-L$, $D \cap (C\cup F^c)\neq \emptyset$. A component $\Gamma$ of $D \cap (C\cup F^c)$ in $D$ is innermost in $D$ if all but one of the boundary components of a closed regular neighborhood of $\Gamma$ in $D$ bound disks in $D$ that are disjoint from $C\cup F^c$. Let $\Gamma$ be an innermost component of $D \cap (C\cup F^c)$ in $D$.

\textbf{Case 1:} Suppose that $\Gamma$ is a loop. Since $\Gamma$ is innermost it bounds a disk $D^*$ in $D$ which is disjoint from $(C\cup F^c)$. If $D^*$ is contained in $H^{out}$, then $\Gamma$ is disjoint from $\partial F^c$ since it is a loop and not a 3-valent graph. If $\Gamma$ is contained in $F^c$, then we obtain a contradiction to Claim 0. Thus, we can assume that $\Gamma \subset C$. If $D^*$ is contained in $H^{out}$, then $D^*$ is a compressing disk for $C \cap B^{\up}$ in $B^{\up}$ or $C \cap B^{\dn}$ in $B^{\dn}$, a contradiction to Claim 1. Hence, $D^*$ is contained in $H^{in}$. Given the natural identification of $H^{in}$ with $D^2\times I$ and the fact that $\Gamma$ is disjoint from $\partial F$, then we can assume without loss of generality that $D^*$ has boundary in $D^2\times \{1\}$. Apply Lemma \ref{lem:weakly reducible2} to $D^*$ and analyze each possible conclusion. If conclusions (1) or (2) of Lemma \ref{lem:weakly reducible2} hold, then we contradict the fact that $\Sigma$ is distance two. If conclusion (3) of Lemma \ref{lem:weakly reducible2} holds, then we contradict Claim 0. If conclusion (4) of \ref{lem:weakly reducible2} holds, we contradict how we chose $\gamma_1, \gamma_2, \gamma_3$. Thus, $\Gamma$ can not be a loop.

\textbf{Case 2:} Suppose that $\Gamma$ is a 3-valent graph. Since $\Gamma$ is embedded in the interior of the disk $D$, the formula for Euler characteristic gives $1=v(\Gamma)-e(\Gamma)+(|D-\Gamma|-1)$ where $v(\Gamma)$ is the number of vertices of $\Gamma$, $e(\Gamma)$ is the number of edges of $\Gamma$ and $|D-\Gamma|$ counts the number of components of the complement of $\Gamma$ in $D$. Since $\Gamma$ is 3-valent, $\frac{2}{3}e(\Gamma)=v(\Gamma)$. Thus, $2=-\frac{1}{3}e(\Gamma) + |D-\Gamma|$. From this equality follows the inequality

$$\frac{e(\Gamma)}{|D-\Gamma|}<3.$$

Since every edge of $\Gamma$ borders exactly two faces and since the closure of each component of $|D-\Gamma|$ has boundary a cycle in $\Gamma$ consisting of an even number of edges, then the above inequality implies the existence of a disk component of $|D-\Gamma|$ with closure $D^*$ such that $\partial D^*$ consists of exactly two edges and two vertices of $\Gamma$ ($D^*$ is a bigon), or $\partial D^*$ consists of exactly four edges and four vertices of $\Gamma$ ($D^*$ is a square).

Suppose $D^*$ is a bigon properly embedded in $H^{in}$. Since $D^*$ is a bigon, $D^*$ meets $F$ in exactly one arc $\alpha$ that is essential in $F$. Identify $H^{in}$ with $D^2 \times I$ and isotope $\partial D^*$ to consist of two vertical arcs in $\partial D^2 \times I$ and two horizontal arcs in $D^2\times \partial I$. Hence $D^*$ meets every level surface $D^2\times \{t\}$ in exactly one arc. Apply Lemma \ref{lem:weakly reducible} and the proof of Claim 4 to the compressing disk $D^*$ to conclude that $F$ has a pair of disjoint boundary compressing disks on opposite sides, $\mc{D}^{\up}$ and $\mc{D}^{\dn}$ such that the arc $F\cap \mc{D}^{\up}$ is isotopic to the arc $F\cap \mc{D}^{\up}$.

Since $F$ is bicompressible in $H^{in}$, we can boundary compress along $\mc{D}^{\up}$ to find a compressing disk for $F$ above $F$ which is disjoint from $\mc{D}^{\up}$. Similarly, we can find a compressing disk for $F$ below $F$ which is disjoint from $\mc{D}^{\dn}$. If these compressing disks have boundary on the same side of $F\cap \mc{D}^{\up}$ in $F$, then we contradict how we chose the curves $\gamma_1, \gamma_2, \gamma_3$. If these compressing disks have boundary on opposite sides of $F\cap \mc{D}^{\up}$ in $F$, then $\Sigma$ has distance at most one, a contradiction. Hence, $D^*$ is not a bigon properly embedded in $H^{in}$.

Suppose $D^*$ is a bigon properly embedded in $B^{\up}\cap H^{out}$ or $B^{\dn}\cap H^{out}$. Then $D^*$ satisfies the definition of $\Delta$ in the proof of Claim 3 and we conclude that $L$ is obtained from a link $L'$ via banding where $L'$ has a bridge sphere with the same bridge number as $\Sigma$, but with distance at most one.

Suppose $D^*$ is a square properly embedded in $H^{in}$. Identify $H^{in}$ with $D^2 \times I$ and, since $D^*$ is a square, we can isotope $\partial D^*$ to consist of four vertical arcs in $\partial (D^2) \times I$, two horizontal arcs in $D^2\times \{1\}$ and two horizontal arcs in $D^2\times \{0\}$. Recall that $\phi$ is the natural height function on $H^{in}$. At any regular value $t$ of $\phi|_{D^*}$, $\phi^{-1}(t)\cap D^*$ consists of exactly two arcs and some number of loops. Since $D^*$ is connected, there must be some saddle singularity of $\phi|_{D^*}$ at which two arcs are banded together.

Since $D^*$ is simply connected, there is at most one saddle where two arcs are banded together to form two new arcs. Let $a$ be the height of this singularity, then the graph $\phi^{-1}(a)\cap D^*$ consists of a single ``X'' and possibly a collection of loops. If any of the loops of intersection of $\phi^{-1}(a)\cap D^*$ are inessential in $\phi^{-1}(a)$, then we can eliminate them via an isotopy of $D^*$ that leaves the saddle singularity fixed. As in the proof of Claim 4, we can apply Lemma \ref{lem:weakly reducible} to find an isotopy of $D^*$ after which conclusion (2) or (3) of Lemma \ref{lem:weakly reducible} holds. Let $b$ be the value for which $D^*\cap \phi^{-1}(b)$ has a pair of disjoint boundary compressing disks on opposite sides, $\mc{D}^{\up}$ and $\mc{D}^{\dn}$.

Suppose conclusion (2) of Lemma \ref{lem:weakly reducible} holds. If $b>a$, then the arc $\beta = \mc{D}^{\dn}\cap \phi^{-1}(b)$ separates $D^*$ into two disks. Let $D'$ be the possibly immersed disk that is the union of $\mc{D}^{\dn}$ and the portion of $D^*$ that is cut off by $\beta$ and lies above $\phi^{-1}(b)$ when near $\beta$, see Figure~\ref{fig:Dprime}. Since $b>a$, $\partial D'$ consists of two vertical arcs in $\partial (D^2) \times I$, one horizontal arc in $D^2\times \{1\}$ and one horizontal arc in $D^2\times \{0\}$. Since the self-intersections of $D'$ correspond to the transverse intersections of embedded disks, then after surgering $D'$ along any loops of self intersection, we can assume $D'$ is an embedded compressing disk. In this case $D'$ is a bigon in $H^{in}$ and we can apply the above argument to derive a contradiction. Similarly, we derive a contradiction if $b<a$. Hence $b=a$. However, $b=a$ implies conclusion (3) rather that conclusion (2) of Lemma \ref{lem:weakly reducible}.

\begin{figure}[htb]
  \begin{center}
  \includegraphics[scale=0.3]{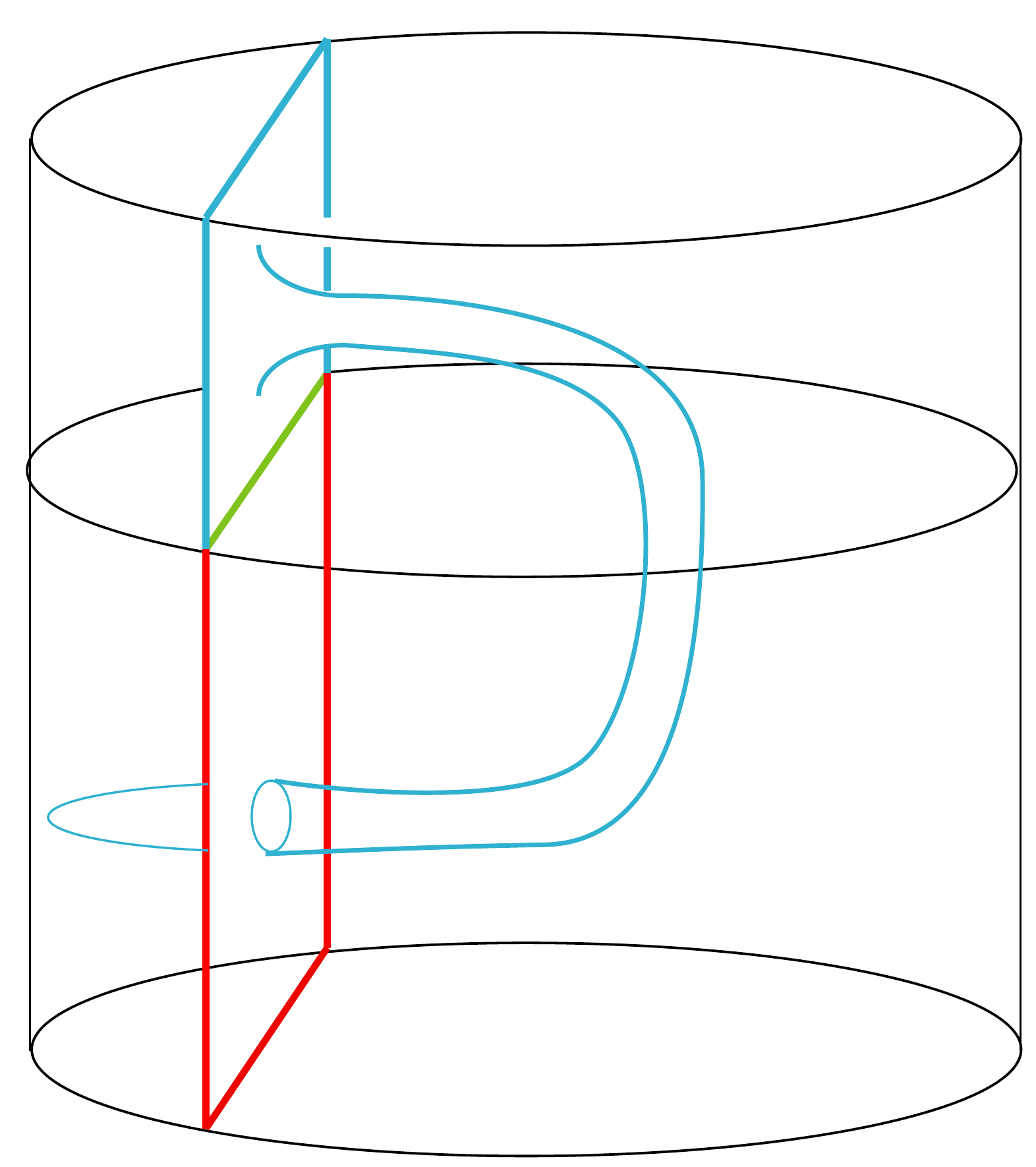}
  \put(-108,110){$D^{*}$}
  \put(-104,84){$\beta$}
  \put(-108,60){$\mc{D}^{\dn}$}
  \caption{The immersed disk $D'$}
  \label{fig:Dprime}
  \end{center}
\end{figure}

Suppose conclusion (3) of Lemma \ref{lem:weakly reducible} holds. If $b>a$ let $\beta = \mc{D}^{\dn}\cap \phi^{-1}(b-\epsilon)\subset D^*\cap \phi^{-1}(b-\epsilon)$ for $\epsilon$ small. We arrive at a contradiction when we consider the immersed disk $D'$ that is the union of $\mc{D}^{\dn}$ below $\phi^{-1}(b-\epsilon)$ and the portion of $D^*$ that is cut off by $\beta$ and lies above $\phi^{-1}(b-\epsilon)$ when near $\beta$ exactly as before. Similarly, we derive a contradiction if $b<a$. If $b=a$, then there are no loops in $D^*\cap \phi^{-1}(a)$ as we have already isotoped $D^*$ to eliminate loops that are inessential in $\phi^{-1}(a)$ and if any such loops are essential in $\phi^{-1}(a)$ we derive a contradiction as in Claim 4. Hence, $\phi^{-1}(a)\cap D^*$ consists of a single ``X.''

Identify $F$ with $\phi^{-1}(a)$ and let $x_1, x_2, x_3, x_4$ be the four properly embedded arcs in $F$ that lie in the boundary of a regular neighborhood of $F\cap D^*$ in $F$. By the above argument, each of these arcs bound a boundary compressing disk for $F$. In particular, $x_1$ and $x_3$ bound boundary compressing disks above $F$ and $x_2$ and $x_4$ bound boundary compressing disks below $F$. By Remark \ref{minsandmaxes}, all maxima of $L \cap H^{in}$ must be contained in the 3-ball between the upper boundary compressing disks in $H^{in}\cap B^{\up}$ and all minima of $L \cap H^{in}$ must be contained in the 3-ball between the lower boundary compressing disks in $H^{in}\cap B^{\dn}$. Hence $D^*$ decomposes $L\cap H^{in}$ into two rational tangles glued together along an unpunctured disk. See Figure \ref{fig:monkeyclasp2}. Thus, $L\cap H^{in}$ is a rational tangle.

\begin{figure}[htb]
  \begin{center}
  \includegraphics[scale=0.6]{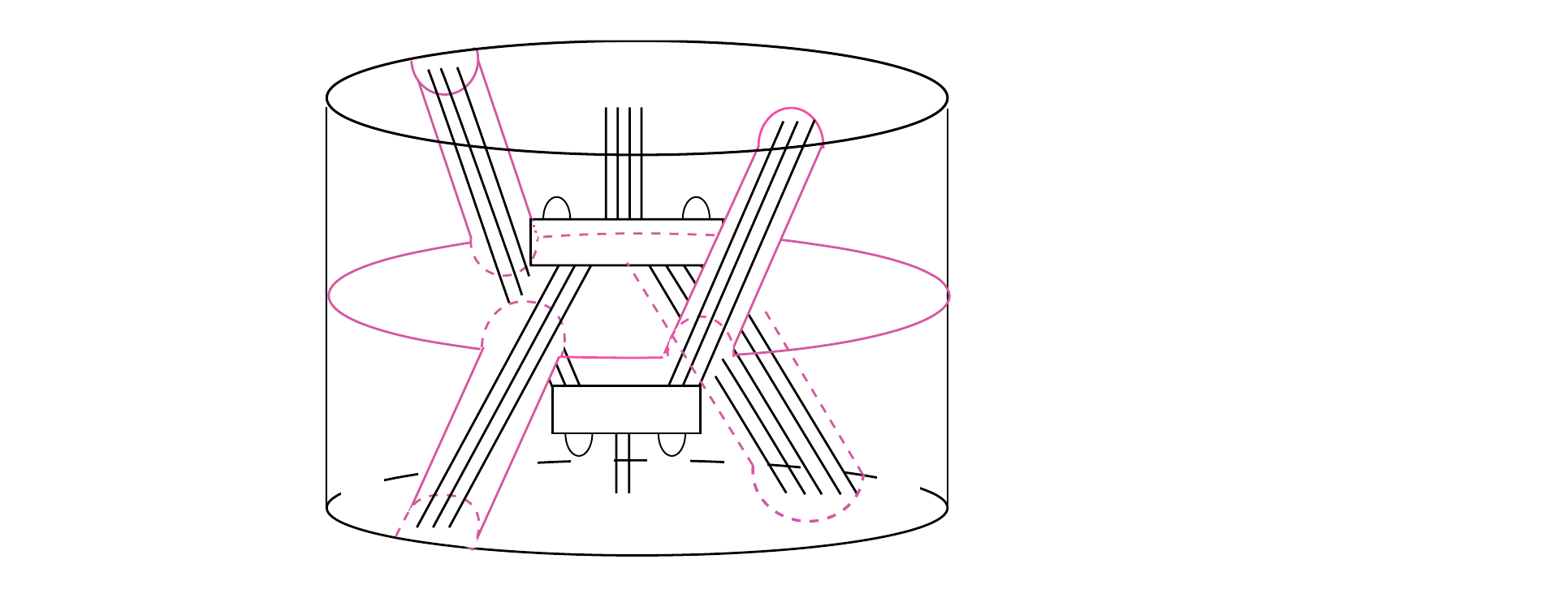}
  \caption{}
  \label{fig:monkeyclasp2}
  \end{center}
\end{figure}

Suppose $D^*$ is a square properly embedded in $B^{\up}\cap H^{out}$ or $B^{\dn}\cap H^{out}$. We consider the case that $D^*$ is a square properly embedded in $B^{\up}\cap H^{out}$ and the case when $D^*$ is a square properly embedded in $B^{\dn}\cap H^{out}$ will follow by a symmetric argument. Recall that $F^c$ is the closure of the complement of $F$ in $\Sigma$. Since $D^*$ is a square, it meets $F^c$ in exactly two essential arcs. If these arcs are not isotopic in $F^c$ then $D^*$ meets the definition of $\Delta$ in the proof of Claim 3 and we conclude that $L$ is obtained from a link $L'$ via banding where $L'$ has a bridge sphere with the same bridge number as $\Sigma$, but with distance at most one. Hence, we can assume that the two arcs in $F^c\cap D^*$ together with two arcs in $\partial F^c$ bound a disk $D^F$ in $F^c$ which is disjoint from $L$. After a small isotopy pushing $D^F$ into the interior of $B^{\up}$, $A=D^F\cup D^*$ is an annulus properly embedded in $B^{\up}\cap H^{out}$ with boundary in $C \cap B^{\up}$. If $A$ is compressible in the exterior of $L$ in $H^{out}$, then, after compressing $A$, each component of $\partial A$ bounds a compressing disk for $C \cap B^{\up}$ in $H^{out}$, a contradiction to the incompressibility of $C$ in $H^{out}$. Hence, $A$ is incompressible. $A$ is not boundary parallel since $\partial A$ partitions $C$ into three subsurfaces, each of which has non-trivial intersection with $L$. Lastly, $A$ is boundary incompressible since boundary compressing $A$ would result in a compressing disk for $C$ in $H^{out}$ which was ruled out as a possibility at the beginning of the proof of this claim.

\qed(Claim 5)

\medskip

To conclude, we note that if $C$ is incompressible, then conclusion (1) holds; if $C$ is compressible into $H^{out}$, then, by Claim 4, conclusion (3) holds; and if $C$ is compressible into $H^{in}$ but incompressible into $H^{out}$, then, by Claim 3 and Claim 5, at least one of conclusions (1), (2) or (3) holds. \end{proof}

\begin{proof}[Proof of Corollary~\ref{coro:main}]
Let $L$ be a 3-bridge knot with a 6-punctured bridge sphere $\Sigma$ of distance two. Let $C$ be the boundary of a regular neighborhood $H^{in}$ of $F$ as defined at the beginning of the proof of Theorem~\ref{thm:main}. Recall that after isotopying $L$ relative to $\Sigma$ so as to minimize $|C\cap L|$, $|C\cap L|$ is at most $2|F\cap L|-4$ since $F$ is compressible in $B^{\up}$ and in $B^{\dn}$. Since $\partial F$ is essential in $\Sigma$, $|F\cap L|=$ 2, 3 or 4.

If $|F\cap L|=2$, then $|C\cap L|=0$ and $\Sigma$ is distance zero, a contradiction.

If $|F\cap L|=3$, then $|(C\cap B^{\up})\cap L|=1$ and $|(C\cap B^{\dn})\cap L|=1$. Let $\alpha$ be the strand of $L\cap B^{\up}$ that is disjoint from $H^{in}$ and let $E$ be a bridge disk for $\alpha$. By an outermost arc, innermost loop argument, we can assume that $E$ is disjoint from $C$. The boundary of a regular neighborhood of $E$ in $B^{\up}$ is a compressing disk for $F^c$, this contradicts the incompressibly of $F^c$.

If $|F\cap L|=4$, then $|C\cap L|=$ 0, 2 or 4. In the case that $|C\cap L|=$ 0 or 2, one of $C\cap B^{\up}$ or $C\cap B^{\dn}$ is disjoint from $L$ and $F^c$ is compressible, a contradiction. Hence, $|F\cap L|=4$ and $|C\cap L|=4$.

If $C$ is incompressible, then conclusion (1) of the corollary holds.

If $C$ is compressible into $H^{in}$, then $L\cap H^{in}$ is a rational tangle or at least one strand of $L\cap H^{in}$ is knotted. If $L\cap H^{in}$ is a rational tangle, $L$ can be decomposed as the union of three rational tangles $L\cap H^{in}$, $L\cap (H^{out}\cap B^{\up})$ and $L\cap (H^{out}\cap B^{\dn})$ and conclusion (3) of the corollary holds. If at least one strand of $L\cap H^{in}$ is knotted and $C$ is compressible into $H^{in}$, then $L$ is composite or $L$ is 2-bridge, a contradiction. In the case that $L$ is composite, then the distance of $\Sigma$ is at most one by Theorem 1.6 of \cite{D}, a contradiction. The careful reader will notice that Theorem 1.6 of \cite{D} only implies that $\Sigma$ has a compressing disk on one side that is disjoint from a properly embedded disk on the opposite side such that this disk has essential boundary and meets $L$ in at most one point. However, it is an easy exercise to show that any bridge sphere with this property has distance at most one.

If $C$ is compressible in $H^{out}$, then, by Claim 3 of the proof of Theorem~\ref{thm:main}, $L$ is obtained from a link $L'$ via banding where $L'$ has a bridge sphere with the same bridge number as $\Sigma$, but with distance at most one. Hence, $L'$ is the unknot, a 2-bridge knot, or the connected sum of two 2-bridge knots and conclusion (2) of the corollary holds.
\end{proof}

\section{Questions}

Theorem \ref{thm:main} raises a number of interesting questions.

\subsection{Decompositions of bridge surfaces into surfaces of higher distance}
The first concerns the possibility of an analogy between distance 1 and distance 2 bridge spheres.

As motivation, recall that Casson and Gordon \cite{CG} showed that  the existence of a distance 1 Heegaard surface $\Sigma$ for a closed 3-manifold implies the existence of a closed incompressible surface in the 3-manifold. Scharlemann and Thompson \cite{ST} elaborated on the Casson and Gordon construction to show that, in fact, $\Sigma$ can be decomposed into a collection of incompressible surfaces and Heegaard surfaces (for the complement of the incompressible surfaces) each of distance at least 2. Similarly, Thompson \cite{Th} showed (roughly speaking) that if a knot $K \subset S^3$ has a bridge sphere $\Sigma$ which is distance 1 then there is an incompressible meridional surface in the exterior of the knot. Hayashi and Shimokawa \cite{HS} developed that idea and showed that, in fact, a distance $d \leq 1$ bridge surface can be decomposed into essential meridional surfaces and bridge surfaces, each of distance at least $d+1$, for the complementary tangles.

If we view Theorem \ref{thm:main} as an analogue of the Casson-Gordon and Thompson results,  it is natural to wonder:

\begin{question}\label{Q: dist 2 decomp}
If $\Sigma \subset S^3$ is a bridge sphere of distance 2, does $(S^3, K)$ have a tangle decomposition such that $\Sigma$ can be decomposed into bridge spheres, each of distance at least 3, for the tangles in the decomposition?
\end{question}

The proof of Theorem \ref{thm:main} suggests that the sphere $C$ is a possible starting point for such a tangle decomposition. However, in the cases when $C$ is compressible, the bridge sphere induced by $\Sigma$ on the side of $C$ to which $C$ compresses will likely have distance 0. Thus, much needs to be done to obtain a tangle decomposition answering Question \ref{Q: dist 2 decomp}.

Of course, we can be much more ambitious and ask:

\begin{question}
If $\Sigma$ is a bridge surface (of any genus) for a properly embedded 1--manifold $K$ in a closed 3-manifold $M$ of distance $d$, is there a decomposition of $(M,K)$ into (3-manifold, 1-manifold) submanifolds and a decomposition of $\Sigma$ into bridge surfaces, each of distance at least $d + 1$, for the submanifolds?
\end{question}

Hayashi and Shimokawa's work answers the question in the affirmative in the case when $d \leq 1$.

\subsection{Bridge number and distance}

For a knot $K \subset S^3$ we can define $d(K)$ to be the minimal distance of a minimal bridge sphere for $K$. Letting $b(K)$ denote the bridge number of $K$, the pair $(b(K), d(K))$ is an invariant of $K$. Both Conclusion (3) of Theorem \ref{thm:main} and Corollary \ref{coro:main} suggest the following:

\begin{question}
For each $d \geq 0$, is it possible to classify all distance $d$ bridge surfaces of 3-bridge knots?
\end{question}

\begin{question}
Is there a finite number of basic operations on knots in $S^3$ such that a knot $K$ can be obtained, by one of the basic operations, from a knot $K'$ with $(b(K'), d(K')) < (b(K), d(K))$?
\end{question}

\subsection{Dehn surgery questions}

Theorem \ref{thm:main} was inspired, in part, by our studies \cite{BCJTT, BCJTT2} of the relationship between distance and exceptional surgeries on knots. We showed, in particular, that if a knot $K \subset S^3$ of bridge number at least 3 has a reducing surgery or a toroidal surgery then $d(K) \leq 2$. We, therefore, wonder:

\begin{question}
Given a distance 2 bridge sphere $\Sigma$ for a knot $K \subset S^3$, how can we determine if $K$ is a satellite knot?
\end{question}

\begin{bibdiv}
\begin{biblist}
\bibselect{MHL2}
\end{biblist}
\end{bibdiv}

 \end{document}